\begin{document}
\setlength{\baselineskip}{16pt}

\parindent 0.5cm
\evensidemargin 0cm \oddsidemargin 0cm \topmargin 0cm \textheight 22.5cm \textwidth 16cm \footskip 2cm \headsep
0cm

\newtheorem{theorem}{Theorem}[section]
\newtheorem{lemma}{Lemma}[section]
\newtheorem{proposition}{Proposition}[section]
\newtheorem{definition}{Definition}[section]
\newtheorem{example}{Example}[section]
\newtheorem{corollary}{Corollary}[section]

\newtheorem{remark}{Remark}[section]

\numberwithin{equation}{section}

\def\p{\partial}
\def\I{\textit}
\def\R{\mathbb R}
\def\C{\mathbb C}
\def\u{\underline}
\def\l{\lambda}
\def\a{\alpha}
\def\O{\Omega}
\def\e{\epsilon}
\def\ls{\lambda^*}
\def\D{\displaystyle}
\def\wyx{ \frac{w(y,t)}{w(x,t)}}
\def\imp{\Rightarrow}
\def\tE{\tilde E}
\def\tX{\tilde X}
\def\tH{\tilde H}
\def\tu{\tilde u}
\def\d{\mathcal D}
\def\aa{\mathcal A}
\def\DH{\mathcal D(\tH)}
\def\bE{\bar E}
\def\bH{\bar H}
\def\M{\mathcal M}
\renewcommand{\labelenumi}{(\arabic{enumi})}

\def\lan{\langle}
\def\ran{\rangle}
\def\da{\downarrow}
\def\ra{\rightarrow}
\def\bs{\backslash}
\def\ol{\overline}

\def\disp{\displaystyle}
\def\undertex#1{$\underline{\hbox{#1}}$}
\def\card{\mathop{\hbox{card}}}
\def\sgn{\mathop{\hbox{sgn}}}
\def\exp{\mathop{\hbox{exp}}}
\def\OFP{(\Omega,{\cal F},\PP)}
\newcommand\JM{Mierczy\'nski}
\newcommand\RR{\ensuremath{\mathbb{R}}}
\newcommand\CC{\ensuremath{\mathbb{C}}}
\newcommand\QQ{\ensuremath{\mathbb{Q}}}
\newcommand\Z{\ensuremath{\mathbb{Z}}}
\newcommand\NN{\ensuremath{\mathbb{N}}}
\newcommand\PP{\ensuremath{\mathbb{P}}}
\newcommand\abs[1]{\ensuremath{\lvert#1\rvert}}
\newcommand\normf[1]{\ensuremath{\lVert#1\rVert_{f}}}
\newcommand\normfRb[1]{\ensuremath{\lVert#1\rVert_{f,R_b}}}
\newcommand\normfRbone[1]{\ensuremath{\lVert#1\rVert_{f, R_{b_1}}}}
\newcommand\normfRbtwo[1]{\ensuremath{\lVert#1\rVert_{f,R_{b_2}}}}
\newcommand\normtwo[1]{\ensuremath{\lVert#1\rVert_{2}}}
\newcommand\norminfty[1]{\ensuremath{\lVert#1\rVert_{\infty}}}

\title{Stability and uniqueness of generalized traveling waves of lattice  Fisher-KPP equations in heterogeneous media}

\author{Feng Cao\\
Department of Mathematics\\
 Nanjing University of Aeronautics and Astronautics\\
  Nanjing, Jiangsu 210016, P. R. China\\
  and\\
 Wenxian Shen\\
Department of Mathematics and Statistics\\
Auburn University\\
Auburn University, AL 36849\\
U.S.A. \\
\\
Dedicated to Professor Min Qian on the occasion of his 90th birthday }

\date{}
\maketitle

\noindent {\bf Abstract.}
In this paper, we investigate the stability and uniqueness of generalized traveling wave solutions of lattice Fisher-KPP equations with
general time and space dependence. We first show the existence, uniqueness, and stability of strictly positive
entire solutions of such equations. Next, we show the stability and uniqueness of generalized traveling waves connecting the unique strictly positive entire solution and the trivial solution zero.
Applying the general stability and uniqueness theorem, we then prove the existence, stability and uniqueness of periodic traveling wave solutions of lattice Fisher-KPP equations in time and space periodic media, and the existence, stability and uniqueness of generalized traveling wave solutions of lattice Fisher-KPP equations in time heterogeneous media. The general stability result established in this paper implies that the generalized traveling waves
obtained in many cases are asymptotically stable under well-fitted perturbation.

\medskip

\noindent {\bf Key words.}
Heterogeneous media, lattice Fisher-KPP equations, generalized traveling waves, stability, uniqueness.

\medskip

\noindent {\bf Mathematics subject classification.}
35C07, 34K05, 34K60, 34A34, 34D20.

\section{Introduction}

The current paper is to explore the stability and uniqueness  of generalized traveling waves for the following lattice Fisher-KPP equation
\begin{equation}\label{main-eqn}
\dot{u}_{j}(t)=d(t,j+1)\big(u_{j+1}(t)-u_j(t)\big)+d(t,j-1)\big(u_{j-1}(t)-u_j(t)\big)+u_j(t)f(t,j,u_{j}(t)),
\end{equation}
where $j\in\Z$, $\inf_{j\in\Z,t\in\RR} d(t,j)>0$,  and $f(t,j,u)$ is of monostable or Fisher-KPP type. More precisely, we  assume

\medskip

\noindent {\bf (H0)}  {\it $f(t,j,u)$ is locally H\"older continuous in $t\in\R$, Lipschitz continuous in $u\in\R$, and continuously differentiable in $u$ for $u\ge 0$.  Moreover,   $f(t,j,u)=f(t,j,0)$ for $u\le 0$, $f(t,j,u)<0$ for $u\ge M_0$ and some $M_0>0$,   $f_u(t,j,u)<0$ for $u\ge 0$,  and
\begin{equation}
\label{assumption-eq}
\liminf_{t-s\to \infty}\frac{1}{t-s}\int_s^t \inf_{j\in\Z} f(\tau,j,0)d\tau>0.
\end{equation}
}

\medskip

Let
$$
l^\infty=\{u=\{u_i\}_{i\in \Z}:\sup \limits_{i \in\Z}|u_i|<\infty\}
$$
with norm $\|u\|=\|u\|_\infty=\sup_{i\in\Z}|u_i|$, and
 $$
 l^{\infty,+}=\{u\in l^\infty\,|\, u_i\ge 0,\quad \forall\,\, i\in\Z\},\,\, l^{\infty,++}=\{u\in l^\infty\,|\, \inf_{ i\in\Z} u_i> 0\}.
 $$
 By (H0), for any given $u^0\in l^\infty$ and $s\in\R$, \eqref{main-eqn} has a unique (local) solution
$u(t;s,u^0)=\{u_i(t;s,u^0)\}_{i\in\Z}$ with $u(s;s,u^0)=u^0$. Moreover,  if $u^0\in l^{\infty,+}$, then $u(t;s,u^0)=\{u_i(t;s,u^0)\}_{i\in\Z}$ exists for all $t\ge s$ and $u(t;s,u^0)\in l^{\infty,+}$ for all $t\geq s$ (see Lemma \ref{comparison-lm}).

Equation (\ref{main-eqn}) is used to model the  population dynamics of species living in patchy environments in biology and ecology (see, for example, \cite{ShKa97, ShSw90}).
It is the discrete counterpart of the following reaction diffusion equation,
\begin{equation}\label{eqn-con}
u_{t}=d(t,x)u_{xx}+uf(t,x,u).
\end{equation}
Equation \eqref{eqn-con} is widely used to model the population dynamics of species when the movement or internal dispersal of the organisms
occurs between adjacent locations randomly in  spatially continuous media.  Note that for the biological reason, we are only interested in nonnegative solutions of \eqref{main-eqn}. The assumption $f(t,j,u)=f(t,j,0)$
for $u\le 0$ has no effect on nonnegative solutions of \eqref{main-eqn} and is just for convenience.

 One of the central dynamical issues about \eqref{main-eqn} and \eqref{eqn-con}
 is to know how a solution whose initial datum  is strictly positive,  or is a front-like function evolves as time increases.
   For example, it is important to know how a solution $u(t;s,u^0)$ of (\ref{main-eqn}) evolves as $t$ increases, where $u^0$ is strictly positive (that is, $\inf_{i\in\Z}u_i^0>0$), or  $u^0$ is nonnegative and a front-like function (that is,
$$\sup_{i\geq I}u^0_i\ra 0,~~~~\inf_{i\leq -I}u^0_i\ra u_*>0 \quad {\rm as}\quad I\ra \infty.$$
The later is about the front propagation dynamics of  \eqref{main-eqn} and \eqref{eqn-con}, and is strongly related to the  so called  traveling wave solutions  of \eqref{main-eqn}
(resp \eqref{eqn-con}) when $d(t,j)\equiv d$ and $f(t,j,u)\equiv f(u)$ (resp. $d(t,x)\equiv d$ and $f(t,x,u)\equiv f(u)$).

The study of traveling wave solutions of \eqref{eqn-con} traces back to Fisher \cite{Fish37} and Kolmogorov, Petrovsky and Piskunov \cite{KPP37}
 in the special case $d(t,x)=1$ and $f(t,x,u)=1-u$.   Thanks to the pioneering works \cite{Fish37} and \cite{KPP37}, \eqref{main-eqn} and \eqref{eqn-con} with $f$ satisfying
(H0) are called Fisher or KPP type equations in literature.
Since the  works by Fisher (\cite{Fish37}) and Kolmogorov, Petrovsky, Piskunov  (\cite{KPP37}),
traveling wave solutions of Fisher or KPP type evolution equations  in spatially and temporally homogeneous media or spatially and/or temporally
periodic media have been widely studied.
 The reader is referred to \cite{ArWe75, ArWe78,   BeHaNa1, BeHaNa2, BeHaRo, BeNa12, FrGa, HuSh09, Kam76, KoSh, LiZh1, LiZh2, Na09,  NoRuXi, NoXi, NRRZ12, Sa76, Sh10, Sh04,  Uch78, Wei02}, etc.,  for the study of Fisher or KPP type reaction diffusion equations  in homogeneous or periodic media.
The following is a brief review on traveling wave solutions of  Fisher or KPP type lattice equations
in homogeneous or periodic media.

Consider \eqref{main-eqn} in the homogeneous media, that is,  $d(t,j)\equiv d$ and $f(t,j,u)\equiv f(u)$.  By (H0), there is a unique $u^+>0$ such that for any $u^0\in l^\infty$ with $\inf_{j\in\Z} u_j^0>0$,
$$
\lim_{t-s\to\infty} \|u(t;s,u^0)-u^+\|_{l^\infty}=0.
$$
In this case, a solution $\{u_j(t)\}$ of \eqref{main-eqn}  is called a {\it traveling wave solution}  connecting $u=0$ and $u=u^+$
(traveling wave solution for short) if it is an entire solution (i.e. a solution defined for $t\in (-\infty,\infty)$) and there are a constant $c$ and a function $\Phi(\cdot)$  such that
$$
0<u_j(t)=\Phi(j-ct)<u^+\quad \forall\,\, t\in\RR,\,\, j\in \Z,\quad \lim_{z\to -\infty} \Phi(z)=u^+,\,\,\, \lim_{z\to\infty}\Phi(z)=0,
$$
where $c$ and $\Phi(\cdot)$ are  called the {\it wave speed} and {\it wave profile} of the traveling wave solution, respectively.
It is known that there is $c^*>0$ such that \eqref{main-eqn} has a traveling wave solution with speed $c$ if and only if $c\ge c^*$.
The reader is referred to \cite{ChGu02, ChGu03, HuZi94, WuZo97, ZiHaHu93}, etc. for the existence of traveling wave solutions,  and to
\cite{ChGu02, ChGu03, MaZh}, etc. for the uniqueness and stability of traveling wave solutions.

If $d(t,j)$ and $f(t,j,u)$ are periodic in $t$  and $j$ with periods $T\in\RR^+$ and $J\in \Z^+$, respectively, by (H0), there is a unique
positive periodic solution $u^+(t)=\{u_j^+(t)\}$ with $u^+_j(t)=u^+_j(t+T)=u^+_{j+J}(t)$ of \eqref{main-eqn} such that for  any $u^0\in l^\infty$ with $\inf_{j\in\Z} u_j^0>0$,
$$
\lim_{t-s\to\infty} \|u(t;s,u^0)-u^+(t)\|_{l^\infty}=0.
$$
An entire solution $\{u_j(t)\}$ of \eqref{main-eqn} is called a {\it periodic traveling wave solution} or a {\it pulsating wave solution}
connecting $u=0$ and $u=u^+$ if there are a constant $c$ (called {\it wave speed})  and a function $\Phi(\cdot,\cdot,\cdot)$
(called {\it wave profile}) such that
$$
0<u_j(t)=\Phi(j-ct,t,j)<u^+_j(t)\quad \forall \,\,  t\in\RR,\,\, j\in\Z,
$$
$$
 \lim_{x\to -\infty}\Phi(x,t,j)=u^+_j(t),\,\,\,\lim_{x\to\infty}\Phi(x,t,j)=0,
$$
and
$$
\Phi(\cdot,\cdot+T,\cdot)=\Phi(\cdot,\cdot,\cdot+J)=\Phi(\cdot,\cdot,\cdot).
$$
The reader is referred to \cite{GuHa06, HuZi94}, etc. for the existence of periodic traveling wave solutions
and to \cite{GuWu09} for the uniqueness and stability of periodic traveling wave solutions in the case that $d(t,j)$ and
$f(t,j,u)$ are independent of $t$ and periodic in $j$. We note that the existence of periodic traveling wave solutions of \eqref{main-eqn} in the case that
$d(t,j)$ and $f(t,j,u)$ are independent of $j$ and periodic in $t$  follows from the works \cite{LiZh1, Wei82}, and the uniqueness and stability of periodic traveling wave solutions in this case remains open.  We also note that the existence of periodic traveling wave solutions of \eqref{main-eqn} in the case that
$d(t,j)$ and $f(t,j,u)$ are periodic in both $t$ and $j$  follows from the works \cite{LiZh2, Wei02}, and the uniqueness and stability of periodic traveling wave solutions in this case remains open too.

The study of front propagation dynamics of Fisher-KPP type equations with general time and/or space dependence is
more recent,  is attracting more and more attention due to the presence of general time and  space variations in real world problems, but is not much.
To study the front  propagation dynamics of Fisher-KPP type equations with general time and/or space dependence, one first needs
to properly  extend the notion of
traveling wave solutions in the classical sense. Some general extension has been introduced in literature. For example,
in \cite{Sh04}, \cite{Sh11}, notions of random traveling wave solutions and generalized
traveling wave solutions are introduced for random KPP equations and
quite general time dependent KPP equations.
In \cite{BeHa07}, \cite{BeHa12}, a notion of generalized traveling waves is introduced for KPP type equations with general space and
time dependence.

Note that, assuming (H0),  by the similar arguments as those in \cite[Theorem 1.1]{CaSh}, \eqref{main-eqn} has
 a unique strictly positive entire solution
$u^+(t)=\{u^+_j(t)\}$ (i.e $\inf_{j\in\Z, t\in\RR}u^+_j(t)>0$) such that for any $u^0\in l^\infty$  with $\inf_{j\in\Z} u_j^0>0$ and  $s\in\R$,
$$
\lim_{t-s\to\infty} \|u(t;s,u^0)-u^+(t)\|_{l^\infty}=0
$$
(see Proposition \ref{positive-entire-solution-prop}).
An entire solution $\{u_j(t)\}$ of \eqref{main-eqn} is called a {\it generalized traveling wave solution} or {\it transition wave solution}
 connecting $u=0$ and $u=u^+$ if there is a {\it front location function} $X(t)\in\Z$ such that
 $$
 \lim_{j\to -\infty} |u_{j+X(t)}(t)-u_{j+X(t)}^+(t)|=0,\quad \lim_{j\to\infty} u_{j+X(t)}(t)=0
 $$
 uniformly in $t\in\RR$. It is clear that a traveling wave solution of \eqref{main-eqn} in the time and space independent case
 (resp. a periodic traveling wave solution of \eqref{main-eqn} in the time and space periodic case) is a transition wave solution.
 Transition wave solutions for \eqref{eqn-con} are defined similarly.

Quite a few works have been carried out toward the front propagation dynamics of
Fisher-KPP type equations in non-periodic heterogeneous media. For example,
among others, the authors of \cite{NaRo12,  NaRo15, NaRo17} proved the existence of transition waves of \eqref{eqn-con} with general time dependent and space periodic, or  time independent and space almost periodic
KPP nonlinearity.
Zlatos \cite{Zl12} established the existence of transition waves of   spatially inhomogeneous Fisher-KPP reaction diffusion equations { under some specific hypotheses (see (1.2)-(1.5) in \cite{Zl12})}. In \cite{She17}, the stability of transition waves in quite general time and space dependent
Fisher-KPP type reaction diffusion equations is studied.
For spatially discrete KPP equations,
the work  \cite{Sh09} studied spatial spreading speeds of \eqref{main-eqn} with time recurrent KPP nonlinearity $f(t,u)$.
 In the very recent paper \cite{CaSh}, among others, the authors of the current paper established the existence of transition waves in general time dependent Fisher-KPP lattice equations.  However,   there is little study on the stability and uniqueness of transition waves of spatially discrete KPP type equations  with general time and/or space dependence.

 The objective of this paper is to study the stability and uniqueness of transition wave solutions of Fisher-KPP lattice equations in general heterogeneous media and discuss the applications on the existence, stability, and uniqueness of  periodic traveling wave solutions of \eqref{main-eqn} when the coefficients
 are periodic in both $t$ and $j$, and  the applications on the existence, stability, and uniqueness of
  transition wave solutions of \eqref{main-eqn} when
 the coefficients are spatially homogeneous.

 We first establish in Section 2 a general theorem on stability and uniqueness of transition wave solutions of \eqref{main-eqn}
 (see Theorem \ref{main-general-thm}).
 Applying the general stability and uniqueness theorem, we then prove  the existence, stability, and  uniqueness of periodic traveling wave solutions of
 \eqref{main-eqn} when $d(t,j)$ and $f(t,j,u)$ are periodic in $t$ and $j$
  (see Theorem \ref{main-periodic-thm}),  and  the existence, stability and uniqueness of transition wave solutions of \eqref{main-eqn} when $d(t,j)\equiv d(t)$ and $f(t,j,u)\equiv f(t,u)$ (see Theorem \ref{main-time-dependent-thm})  in Section 3 and Section 4, respectively. In the later case, if $d(t)$ and
  $f(t,u)$ are almost periodic in $t$, we also show that the transition waves are almost periodic.
  We will study the existence of transition waves of \eqref{main-eqn} in more general heterogeneous media somewhere else. The general stability and uniqueness theorem established in this paper
 could also be applied to the study of the stability and uniqueness of transition waves in such more general cases.

\section{Stability and uniqueness of transition waves in general heterogeneous media}

In this section, we investigate the stability and uniqueness of transition fronts of \eqref{main-eqn}.

First of all, we have

\begin{proposition}
\label{positive-entire-solution-prop}
Assume (H0). Then there is a unique strictly positive entire solution $u^+(t)=\{u^+_j(t)\}$ such that for any
$u^0\in l^\infty$ with $\inf_{j\in\Z} u^0_j>0$, $u(t;t_0,u^0)$ exists for all $t\ge t_0$, and
$$
\lim_{t\to\infty}\|u(t+t_0;t_0,u^0)-u^+(t+t_0)\|_{l^\infty}=0
$$
uniformly in $t_0\in\RR$.
\end{proposition}

The main results of this section are then stated in the following theorem.

\begin{theorem}
\label{main-general-thm}
Suppose that $u(t)=U(t,\cdot)$ is a transition wave of \eqref{main-eqn} with a front location function $X(t)$ satisfying that
\begin{equation}
\label{cond-eq1}
\forall\,\, \tau>0,\quad \sup_{t,s\in\RR, |t-s|\le \tau} |X(t)-X(s)|<\infty.
\end{equation}
Assume that there are positive continuous functions $\phi(t,j)$ and $\phi_1(t,j)$ such that
\begin{equation}
\label{cond-eq2}
\liminf_{j\to-\infty}\phi(t,j)=\infty,\quad \liminf_{j\to -\infty}\phi_1(t,j)=\infty, \quad \lim_{j\to\infty}\phi(t,j)=0,\quad \lim_{j\to\infty}\phi_1(t,j)=0,
\end{equation}
\begin{equation}
\label{cond-eq3}
\lim_{j\to -\infty}\frac{\phi(t,j+X(t))}{\phi_1(t,j+X(t))}=0,\quad \lim_{j\to\infty}\frac{\phi_1(t,j+X(t))}{\phi(t,j+X(t))}=0
\end{equation}
exponentially, and the second limit in \eqref{cond-eq3} is  uniformly in $t$;
\begin{equation}
\label{cond-eq4}
d^*\phi(t,j)-d_1^* \phi_1(t,j)\le U(t,j)
\le d^*\phi(t,j)+d_1^*\phi_1(t,j)
\end{equation}
for some $d^*,d_1^*>0$ and all $t\in\RR$, $j\in\Z$; and
 for any given $t_0\in\RR$ and $u^0\in l^\infty$ with $u^0_j\ge 0$ and
$$
 u^0_j\ge d\phi(t_0,j)-d_1\phi_1(t_0,j)\quad \Big({\rm resp., }\,\,
u^0_j\le d \phi(t_0,j)+d_1
\phi_1(t_0,j)\Big)
$$
for some $0<d<2d^*$, $d_1\gg 1$, and all $j\in\Z$, there holds
\begin{equation}
\label{cond-eq5}
 u_j(t;t_0,u^0)\ge d\phi(t,j)-d_1 \phi_1(t,j)\quad \Big({\rm resp., }\,\, u_j(t;t_0,u^0) \le  d
\phi(t,j)+d_1 \phi_1(t,j)\Big)
\end{equation}
for all $t\ge t_0$ and $j\in\Z$. Then
the following hold.
\begin{itemize}
\item[(1)] (Stability)  The transition wave $u(t)=U(t,\cdot)$ is asymptotically stable in the sense that
for any $t_0\in\RR$ and $u^0\in l^\infty$ satisfying that
$u^0_j>0$ for all $j\in\Z$ and
\begin{equation}
\label{cond-eq6}
\inf_{j\le j_0}u^0_j>0\quad \forall\,\, j_0\in\Z,\quad \lim_{j\to\infty}\frac{u^0_j}{U(t_0,j)}=1,
\end{equation}
there holds
\begin{equation}
\label{cond-eq7}
\lim_{t\to\infty}\|\frac{u(t+t_0;t_0,u^0)}{U(t+t_0,\cdot)}-1\|_{l^\infty}=0.
\end{equation}

\item[(2)] (Uniqueness) If $u(t)=V(t,\cdot)$ is also a transition wave solution of \eqref{main-eqn} satisfying that
$$
\lim_{j\to\infty} \frac{V(t,j+X(t))}{U(t,j+X(t))}=1
$$
uniformly in $t\in\RR$, then $V(t,\cdot)\equiv U(t,\cdot)$.
\end{itemize}
\end{theorem}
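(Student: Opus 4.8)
The plan is to deduce the uniqueness statement (2) from the stability statement (1), so the substantive work is the stability assertion. For uniqueness, note that if $V$ is a transition wave with $V(t,j+X(t))/U(t,j+X(t))\to 1$ uniformly as $j\to\infty$, then at every time $t_0$ the profile $V(t_0,\cdot)$ is an admissible perturbation: it is strictly positive, bounded below on each left half-line (since $V\to u^+$ behind the front and $\inf_{j,t}u^+_j(t)>0$), and satisfies the matching condition \eqref{cond-eq6}. As $V$ is an entire solution, $u(t+t_0;t_0,V(t_0,\cdot))=V(t+t_0,\cdot)$, so part (1) yields $\|V(t+t_0,\cdot)/U(t+t_0,\cdot)-1\|\to 0$ as $t\to\infty$. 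Because the family $\{V(t_0,\cdot)\}_{t_0}$ satisfies \eqref{cond-eq6} uniformly in $t_0$, I would establish \eqref{cond-eq7} in a form explicitly uniform in the initial time $t_0$; then, fixing an arbitrary time $s$ and letting $t_0=s-t\to-\infty$, the bound $\|V(s,\cdot)/U(s,\cdot)-1\|\to 0$ forces $V(s,\cdot)=U(s,\cdot)$ for every $s$.

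For the stability itself I would prove the two one-sided estimates $\limsup_{t}\sup_j u_j(t+t_0)/U(t+t_0,j)\le 1$ and $\liminf_t\inf_j u_j(t+t_0)/U(t+t_0,j)\ge 1$ separately, writing $u(t)=u(t;t_0,u^0)$. The upper estimate has a cheap global ingredient: comparing $u^0$ with the constant datum $C\mathbf 1$, $C=\max(\|u^0\|_\infty,M_0)$, the comparison principle (Lemma \ref{comparison-lm}) together with Proposition \ref{positive-entire-solution-prop} gives $u_j(t)\le u^+_j(t)+o(1)$ uniformly in $j$ and $t_0$. Ahead of the front I would instead use the prescribed supersolution: taking $d=(1+\e)d^*<2d^*$ and $d_1$ large, conditions \eqref{cond-eq6} and \eqref{cond-eq4} let me verify $u^0_j\le d\phi(t_0,j)+d_1\phi_1(t_0,j)$, so \eqref{cond-eq5} propagates this bound; dividing by $U$ and using $U\ge d^*\phi-d_1^*\phi_1$ with $\phi_1/\phi\to 0$ uniformly from \eqref{cond-eq3}, I obtain $u_j/U\le 1+C\e$ for $j$ far ahead. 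Since $\e$ is arbitrary this settles the upper estimate ahead of the front, while $u\le u^++o(1)$ together with $U\to u^+$ handles it behind.

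The lower estimate is where the argument is genuinely delicate. Ahead of the front the subsolution half of \eqref{cond-eq5} (with $d=(1-\e)d^*$), combined with $U\le d^*\phi+d_1^*\phi_1$ and \eqref{cond-eq3}, gives $u_j/U\ge 1-C\e$ for $j$ far ahead. Behind the front, however, I must show the solution fills up to $u^+$, i.e. $u_j(t)\ge(1-\e)u^+_j(t)$ for $j$ suitably far behind $X(t)$ and $t$ large. The structural point I would exploit is that, because $f_u(t,j,u)<0$ for $u\ge 0$, the function $(1-\e)u^+$ is a subsolution of \eqref{main-eqn}; moreover the ahead-of-front subsolution already keeps $u$ of order one down to positions near $X(t)$, so $u$ is bounded below by a positive constant on a left half-line just behind the front, uniformly for large $t$. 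Launching $(1-\e)u^+$ from such a half-line then yields the required lower bound behind the front, and with $U\to u^+$ there the ratio is controlled.

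The main obstacle is coordinating the front location with this filling-up step: the media are heterogeneous, so I cannot translate the wave $U$ in space, and I must use \eqref{cond-eq1} to guarantee that the region on which $u$ is already of order one stays within a bounded distance of $X(t)$ as $t$ grows, so that $(1-\e)u^+$ can be launched immediately behind the front and the lower bound meets the upper bound across the whole lattice. A related technical point is the bounded transition window $|j-X(t)|\le R$: there $U$ lies between positive constants, both $u$ and $U$ solve \eqref{main-eqn}, and the boundary data of $q=u-U$ tend to $0$ (ahead, both are small with ratio tending to $1$; behind, both tend to $u^+$), so a maximum-principle argument on this moving finite window — again controlled by \eqref{cond-eq1} — should force $q\to 0$ inside and complete the uniform estimate \eqref{cond-eq7}.
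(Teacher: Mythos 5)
Your ahead-of-front estimates (the sandwiching by $d\phi\pm d_1\phi_1$ via \eqref{cond-eq5}), the observation that $(1-\epsilon)u^{+}$ is a subsolution because $f_u<0$, and the reduction of uniqueness to a $t_0$-uniform stability estimate (which parallels the paper's own backward-in-time argument in part (2)) are sound. The genuine gap is in the region at and behind the front, which is exactly where the paper uses the tool you avoid: the part metric of Lemma \ref{part-metric-lm}. Two specific failures. First, the moving-window step: writing $q=u-U$, one gets $\p_t q_j=(\mathcal{A}q)_j+c(t,j)q_j$ with $c(t,j)=f(t,j,\xi_j)+\xi_j f_u(t,j,\xi_j)$ for intermediate values $\xi_j$, and this coefficient is not sign-definite on the window $|j-X(t)|\le R$: near the leading edge $\xi_j$ is small and $c\approx f(t,j,0)$, which is the KPP-unstable coefficient (positive on time average by \eqref{assumption-eq}). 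Hence small boundary data on a fixed-width moving window does not force $q$ to be small inside over unbounded time; perturbations governed by $\mathcal{A}+c$ can grow, and no maximum principle yields the decay you need. Second, the filling-up step: to compare $u$ with $(1-\epsilon)u^{+}$ on the moving half-line $\{j\le X(t)-R_1\}$ you need $u\ge(1-\epsilon)u^{+}$ on the lateral moving boundary and at the initial time, but what you actually have near the front is only $u\ge\eta_1$ with $\eta_1$ possibly far below $(1-\epsilon)\inf u^{+}$, so the comparison never gets started. You also cannot patch this with small constant or bump subsolutions pinned at level $\eta_1$: under (H0), $f(t,j,0)$ may be negative at each fixed time (only its time average is positive), so such functions are not subsolutions pointwise in time --- this is precisely why the proof of Proposition \ref{positive-entire-solution-prop} needs the time-averaging trick, which is not available on a half-line with a moving boundary.

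The paper closes both holes at once with a contraction argument in the part metric. By Lemma \ref{part-metric-lm}, $\rho(t):=\rho\big(u(t+t_0;t_0,u^0),U(t+t_0,\cdot)\big)$ is finite and non-increasing, so the global multiplicative sandwich $e^{-\rho(0)}U\le u\le e^{\rho(0)}U$ holds for all time; this is what supplies, for free, the uniform positive lower bound on $u$ behind and through the transition window. Assuming $\rho(t)\ge\epsilon_0$ for all $t$, the paper compares $u$ with the solution launched from $e^{\rho}U$: since $f_u<0$, their difference satisfies a linear equation with a forcing term $b(t,j)=e^{\rho}U\big[f(\cdot,U)-f(\cdot,e^{\rho}U)\big]\ge b_0>0$ behind the front, which produces a definite drop $\delta$ of $\rho$ per time $\tau$ there, while ahead of the front the sub/supersolution sandwiching \eqref{cond-eq5} (the part of your argument that is correct) keeps the ratio within $1+\epsilon_0/2$. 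Iterating forces $\rho\to-\infty$, a contradiction, and the uniqueness part runs the same contraction from times $t_n=t_0-n$ using the uniform bound $\rho(U(t,\cdot),V(t,\cdot))\le\rho_0$. Without the part metric (or an equivalent global multiplicative sandwich with strict contraction coming from $f_u<0$), your stability proof does not close, and the uniqueness reduction inherits the gap.
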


To prove Proposition \ref{positive-entire-solution-prop} and Theorem \ref{main-general-thm}, we first present some lemmas.

 A  function $v(t,j)$ on $[s,t)\times \Z$   is called a {\it super-solution} or {\it sub-solution} of \eqref{main-eqn}  if for any given $j\in\Z$, $v(t,j)$ is  absolutely continuous
in  $t\in [s,T)$,  and
$$ v_t(t,j)\ge d(t,j-1)\big(v(t,j-1)-v(t,j+1)\big)+d(t,j+1)\big(v(t,j+1)-v(t,j)\big)+v(t,j) f(t,j,v(t,j))
$$
 for a.e. $t\in [s,T)$,
or
$$v_t(t,j)\le d(t,j-1)\big(v(t,j-1)-v(t,j)\big)+d(t,j+1)\big(v(t,j+1)-v(t,j)\big)+v(t,j) f(t,j,v(t,j))
$$
 for a.e. $t\in [s,T).$ For given $u,v\in l^\infty$, we define
 $$
 u\le (\ge ) v\quad {\rm if}\quad u_j\le (\ge) v_j\quad \forall\,\, j\in\Z.
 $$

\begin{lemma}[Comparison principle]
\label{comparison-lm}
\begin{itemize}
\item[(1)]
If $u_1(t,j)$ and $u_2(t,j)$ are bounded sub-solution and super-solution of \eqref{main-eqn} on $[s,T)$, respectively, and $u_1(s,\cdot)\leq u_2(s,\cdot)$, then $u_1(t,\cdot)\leq u_2(t,\cdot)$ for $t\in[s,T)$.

\item[(2)] Suppose that $u_1(t,j)$, $u_2(t,j)$ are bounded and satisfy that for any given $j\in\Z$,
 $u_1(t,j)$ and $u_2(t,j)$ are absolutely continuous in $t\in[s,\infty)$, and

\medskip
 \item[]  $\p_t u_2(t,j)-\Big( d(t,j-1)u_2(t,j-1)+d(t,j+1) u_2(t,j+1)-\big(d(t,j-1)+d(t,j+1)\big) u_2(t,j)+u_2(t,j)f(t,j,u_2(t,j))\Big)>\p_t u_1(x,t)-\Big( d(t,j-1)u_1(t,j-1)+d(t,j+1) u_1(t,j+1)-\big(d(t,j-1)+d(t,j+1)\big) u_1(t,j)+u_1(t,j)f(t,j,u_1(t,j))\Big)$

\medskip
    \item[] for a.e. $t>s$.
    Moreover, suppose that $u_2(s,j)\geq u_1(s,j)$. Then $u_2(t,j)>u_1(t,j)$ for $j\in\\Z$, $t>s$.

\item[(3)] If $u^0\in l^{\infty,+}$, then $u(t;s,u^0)$ exists and
 $u(t;s,u^0)\ge 0$ for all $t\ge s$.
\end{itemize}
\end{lemma}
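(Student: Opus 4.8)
The plan is to prove the Comparison principle (Lemma \ref{comparison-lm}) by reducing everything to a strong maximum principle for the linear lattice operator, obtained via the usual device of multiplying by an exponential weight so that the zeroth-order term becomes strictly negative. First I would establish part (2), the strict inequality with strict differential inequality, since parts (1) and (3) will follow from it by approximation and a Lipschitz estimate on $f$. Set $w(t,j)=u_2(t,j)-u_1(t,j)$. Using (H0), write the difference of the reaction terms as $u_2 f(t,j,u_2)-u_1 f(t,j,u_1)=c(t,j)\,w(t,j)$ for a bounded measurable coefficient $c(t,j)$ (by the mean value theorem in $u$, exploiting Lipschitz continuity of $u\mapsto u f(t,j,u)$ and boundedness of $u_1,u_2$). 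Then $w$ satisfies, for a.e.\ $t$,
\begin{equation*}
\p_t w(t,j) > d(t,j-1)w(t,j-1)+d(t,j+1)w(t,j+1)-\bigl(d(t,j-1)+d(t,j+1)\bigr)w(t,j)+c(t,j)\,w(t,j),
\end{equation*}
with $w(s,\cdot)\ge 0$ and $w$ bounded.

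Next I would run the maximum-principle argument. Fix a large constant $K>\sup|c(t,j)|$ (finite since $c$ is bounded on the relevant range, using $\inf d>0$ only for later parts and boundedness of $f$ on bounded sets) and consider $v(t,j)=e^{-Kt}w(t,j)$, which satisfies a differential inequality whose zeroth-order coefficient $c(t,j)-K$ is uniformly negative. Suppose for contradiction that $v(t_1,j_1)\le 0$ for some $t_1>s$, $j_1\in\Z$; let $m=\inf_{s\le t\le t_1,\,j\in\Z} v(t,j)\le 0$. Because $v$ is bounded and the spatial coupling is through nearest neighbors, one shows the infimum is effectively attained (or approached) at an interior point in $t$, and at such a point the discrete Laplacian contribution $v(t,j\pm1)-v(t,j)$ is $\ge 0$ while $\p_t v\le 0$, yet the strict inequality forces $(c-K)v(t,j)\le \p_t v - (\text{diffusion})<0$, which contradicts $(c-K)<0$ together with $v\le 0$. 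Hence $v>0$, i.e.\ $w>0$, on $(s,\infty)\times\Z$, giving part (2).

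For part (1), I would approximate: given a sub-solution $u_1$ and super-solution $u_2$ (non-strict) with $u_1(s,\cdot)\le u_2(s,\cdot)$, perturb $u_2$ to $u_2^\e(t,j)=u_2(t,j)+\e e^{Lt}$ for suitable $L$ large enough (depending on the Lipschitz bound of $u\mapsto uf(t,j,u)$ on bounded sets) so that $u_2^\e$ becomes a strict super-solution and $u_2^\e(s,\cdot)>u_1(s,\cdot)$; apply part (2) to conclude $u_1(t,\cdot)<u_2^\e(t,\cdot)$ for $t\in(s,T)$, then let $\e\to 0$ to obtain $u_1(t,\cdot)\le u_2(t,\cdot)$. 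Part (3) follows from part (1) by comparing $u(t;s,u^0)$ with the subsolution $\underline u\equiv 0$, using that $f(t,j,u)=f(t,j,0)$ for $u\le 0$ so that $0$ is a solution of the linear equation dominated below by the reaction term; global existence for $u^0\in l^{\infty,+}$ comes from the a priori upper bound via the supersolution $\bar u\equiv \max\{\|u^0\|,M_0\}$, which is a supersolution because $f(t,j,u)<0$ for $u\ge M_0$.

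The main obstacle I anticipate is the maximum-principle step on the unbounded lattice $\Z$: since the infimum of $v$ need not be attained at any single lattice point, the naive "evaluate at the minimizing point" argument must be replaced by a careful limiting argument. I would handle this either by a sliding/penalization trick (subtract a small spatial tail that forces the extremum to be attained on a finite window and then remove it) or by working with a minimizing sequence $(t_n,j_n)$ and passing to the limit in the differential inequality, using the boundedness of $d(t,j)$ and of the coefficients to control the nearest-neighbor terms uniformly. Getting the strictness to survive this limiting process — rather than degrading to a non-strict conclusion — is the delicate point, and the exponential weight $e^{-Kt}$ with the strict differential inequality is precisely what keeps the contradiction strict.
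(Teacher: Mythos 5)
The paper offers no argument of its own for this lemma --- its ``proof'' is a one-line deferral to \cite[Proposition 2.1]{CaSh} --- so your proposal must stand on its own. Your skeleton (mean-value linearization of the difference, a lattice maximum principle, an $\e e^{Lt}$-perturbation to pass from the strict statement (2) to the non-strict statement (1), and the pair $0$ and $\max\{\|u^0\|_\infty,M_0\}$ for (3)) is the standard route and is in the spirit of the cited proof. However, there is a genuine gap, and it sits exactly where you place all the weight: part (2). Two problems. First, the differential inequalities hold only for a.e.\ $t$ and $u_1,u_2$ are merely absolutely continuous, so you are not entitled to evaluate ``$\p_t v\le 0$'' and the equation at a minimizing (or approximately minimizing) time --- any such time instant is a null set; inequalities of this type must be integrated, never evaluated pointwise. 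Second, and more seriously, your contradiction has quantitative content only when $m=\inf v<0$: then $(c-K)v$ stays bounded away from $0$ near the infimum and survives the degradation along a minimizing sequence. But the case you must exclude to get \emph{strictness} from the non-strict initial ordering $u_2(s,\cdot)\ge u_1(s,\cdot)$ is precisely $v\ge 0$ everywhere with $v(t_1,j_1)=0$ at one site, i.e.\ $m=0$; there, every term in your inequality tends to $0$ along minimizing sequences and no contradiction remains. Neither penalization nor passing to limits repairs this, because the failure is not non-attainment of the infimum but the vanishing of the gap. You correctly flag this as ``the delicate point,'' but you do not close it, and your parts (1) and (3) are built on part (2).

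The standard repair is to reverse your logical order. First prove the \emph{non-strict} comparison $w:=u_2-u_1\ge 0$ by integration rather than evaluation: setting $w^-=\max\{-w,0\}$ and $S(t)=\sup_j w^-(t,j)$, the a.e.\ identity $\p_t w^-=-\chi_{\{w<0\}}\,\p_t w$ together with the bounds $0\le d\le d_{\sup}$, $|c|\le C_0$ and the fact that $w=-w^-$ and $(d(t,j-1)+d(t,j+1))w\le 0$ on $\{w<0\}$ gives $\p_t w^-(t,j)\le (2d_{\sup}+C_0)\,S(t)$ a.e., hence $S(t)\le (2d_{\sup}+C_0)\int_s^t S(\tau)\,d\tau$, and Gronwall forces $S\equiv 0$. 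Then upgrade to strictness \emph{site by site}: once $w(t,j\pm 1)\ge 0$ is known, the strict inequality yields $\p_t w(t,j)>\big(c(t,j)-d(t,j-1)-d(t,j+1)\big)w(t,j)\ge -Cw(t,j)$ for a.e.\ $t$, so $e^{Ct}w(t,j)$ is absolutely continuous with a.e.\ strictly positive derivative; integrating, $e^{Ct}w(t,j)>e^{Cs}w(s,j)\ge 0$, i.e.\ $w(t,j)>0$ for every $t>s$ and $j\in\Z$. With part (2) so repaired, your deductions of (1) (work on finite horizons $[s,T']$ so that $\e e^{Lt}$ stays bounded and $u_2^{\e}$ remains admissible) and of (3) go through as you describe; note also that your argument uses $d_{\sup}<\infty$, which the paper assumes implicitly (it writes $d_{\sup}$, $d_{\max}$ elsewhere), while $\inf d>0$ plays no role in this lemma.
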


\begin{proof}
It follows from the similar arguments as those in \cite[Proposition 2.1]{CaSh}.
\end{proof}

\begin{lemma}
\label{convergence-lm}
Suppose that $u^{0n},u^0\in l^{\infty,+}$ $(n=1,2,\cdots)$ with $\{\|u^{0n}\|\}$ being  bounded.
If for any $j\in\Z$, $u^{0n}_j\to u^0_j$ as $n\to\infty$,
 then for each $t>0$ and $j\in\Z$, $u_j(s+t;s,u^{0n})- u_j(s+t;s,u^0)\to 0$ as $n\to\infty$ uniformly in $s\in\R$.
\end{lemma}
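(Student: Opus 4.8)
The plan is to reduce the statement to a linear comparison argument on the lattice and to exploit the super-exponential decay of the discrete heat kernel, which is what makes the far-field part of the initial perturbation negligible at any fixed site after a fixed time, and makes all estimates uniform in the initial time $s$.

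First I would fix the uniform bound. Let $M:=\sup_n\|u^{0n}\|$ (finite by hypothesis) and set $\tilde M:=\max\{M,\|u^0\|,M_0\}$. Since $f(t,j,\tilde M)<0$ (because $\tilde M\ge M_0$), the constant function $\tilde M$ is a super-solution of \eqref{main-eqn}, so Lemma \ref{comparison-lm} gives $0\le u_j(t;s,u^{0n})\le\tilde M$ and $0\le u_j(t;s,u^0)\le\tilde M$ for all $t\ge s$, $j\in\Z$, $n$, independently of $s$. Writing $w_j(t):=u_j(t;s,u^{0n})-u_j(t;s,u^0)$ and $g(t,j,u):=u f(t,j,u)$, the map $g(t,j,\cdot)$ is Lipschitz on $[0,\tilde M]$ with a constant $L$ uniform in $(t,j)$ (using (H0), $\sup_{t,j}|f(t,j,0)|<\infty$, and $\tilde M<\infty$). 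Hence $w$ solves the linear nonautonomous lattice equation
\[
\dot w_j = d(t,j+1)(w_{j+1}-w_j) + d(t,j-1)(w_{j-1}-w_j) + c_j(t)\,w_j,
\]
where $c_j(t):=\big(g(t,j,u_j(t;s,u^{0n}))-g(t,j,u_j(t;s,u^0))\big)/w_j(t)$ when $w_j(t)\ne0$ and $c_j(t):=0$ otherwise, so that $|c_j(t)|\le L$ for all $t,j$. Note $w_j(s)=u^{0n}_j-u^0_j$ with $|w_j(s)|\le 2M$ for all $j$.

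Next I would split the initial perturbation into a near and a far part around the target site $j_0$. Fix $t>0$, $j_0\in\Z$, and a target accuracy $\eta>0$. Because the sequences $u^{0n},u^0$ do not depend on $s$, the pointwise convergence $u^{0n}_j\to u^0_j$ is uniform on any finite set of indices; so for each radius $R$ there is $N=N(R,\delta)$, independent of $s$, with $|w_j(s)|<\delta$ for all $|j-j_0|\le R$ and $n\ge N$, while $|w_j(s)|\le 2M$ for the remaining $j$. The heart of the argument is to show that the far part contributes only a super-exponentially small amount at $j_0$ after time $t$. For this I would build an explicit super-solution of the linear $w$-equation dominating $|w|$. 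Set $d^+:=\sup_{t,j}d(t,j)<\infty$, fix $\gamma>1$, put $\nu:=2d^+(\gamma-1)+L$, and define for $t\ge s$
\[
z_j(t) := \delta\,e^{L(t-s)} + 2M\,e^{\nu(t-s)}\,\gamma^{|j-j_0|-R}.
\]
A direct computation using $\gamma^{\pm1}-1$ and $0<d\le d^+$ shows $z$ is a super-solution of $\dot z_j=d(t,j+1)(z_{j+1}-z_j)+d(t,j-1)(z_{j-1}-z_j)+Lz_j$ with $z\ge0$; since $|c_j(t)|\le L$ and $z\ge0$, $z$ is also a super-solution, and $-z$ a sub-solution, of the $w$-equation. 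At $t=s$ one has $z_j(s)=\delta+2M\gamma^{|j-j_0|-R}\ge|w_j(s)|$ for all $j$ (indeed $\ge\delta$ on the near part and $\ge 2M$ on the far part). Comparison for linear lattice equations with bounded coefficients (the same argument as Lemma \ref{comparison-lm}) then yields $|w_j(t)|\le z_j(t)$ for all $t\ge s$, $j\in\Z$; in particular, since $\gamma^{|j_0-j_0|-R}=\gamma^{-R}$,
\[
|w_{j_0}(s+t)| \le \delta\,e^{Lt} + 2M\,e^{\nu t}\,\gamma^{-R}.
\]

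Finally I would fix the constants in the right order to beat $\eta$: choose $R$ large so that $2Me^{\nu t}\gamma^{-R}<\eta/2$, then $\delta:=\eta/(2e^{Lt})$, then $N=N(R,\delta)$ as above; for $n\ge N$ this gives $|u_{j_0}(s+t;s,u^{0n})-u_{j_0}(s+t;s,u^0)|<\eta$. Since $\tilde M$, $L$, $d^+$, and especially $N$ are all independent of $s$, the convergence is uniform in $s\in\R$. The main obstacle is the third step, quantifying the influence of the far-field data; and the only place uniformity in $s$ could fail is the choice of $N$, which depends solely on the fixed initial sequences and the $s$-independent constants, so uniformity is automatic. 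This step also pins down where the standing bounds $\sup_{t,j}d(t,j)<\infty$ and $\sup_{t,j}|f(t,j,0)|<\infty$ enter.
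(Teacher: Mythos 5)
Your proof is correct in substance, and it is more informative than what the paper itself offers: the paper disposes of Lemma \ref{convergence-lm} with a one-line deferral to the arguments of \cite[Proposition 2.2]{CaSh}, so your near/far decomposition of the initial error combined with an exponential-weight super-solution is a genuinely self-contained argument rather than a reproduction of the paper's text. The chain is sound: the uniform a priori bound $\tilde M$ via the constant super-solution, the linearization of the difference $w$ with $|c_j(t)|\le L$, the choice $\nu=2d^+(\gamma-1)+L$ (which works because $z_{j\pm1}\le\gamma z_j$ termwise, regardless of the sign of the discrete differences), and the order of choices $R\to\delta\to N$ with every constant independent of $s$; uniformity in $s$ does indeed reduce to the $s$-independence of $N$, which holds because the initial data do not depend on $s$.

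One step needs tightening. Your $z$ is unbounded in $j$ (it grows like $\gamma^{|j-j_0|}$), whereas Lemma \ref{comparison-lm}(1) is stated for \emph{bounded} sub- and super-solutions, so it cannot be invoked verbatim, nor can ``the same argument'' be cited without comment. Two routine fixes: (i) cap the super-solution, comparing $w$ instead with $\hat z_j(t):=\min\{z_j(t),\,(M+\tilde M)e^{L(t-s)}\}$; a minimum of two super-solutions of the linear $w$-equation is again a super-solution (the diffusion coefficients are nonnegative, so lowering the neighbors only lowers the right-hand side), $\hat z$ is bounded, it still dominates $|w(\cdot)(s)|$ at $t=s$, and $\hat z_{j_0}\le z_{j_0}$, so the final estimate is unchanged; or (ii) note that $z-w$ is bounded below and tends to $+\infty$ as $|j|\to\infty$ uniformly on compact time intervals, so any negativity of $z-w$ would be confined to finitely many sites, where the finite-dimensional argument (Gronwall applied to $\min\{0,\min_j(z_j-w_j)\}$) yields a contradiction. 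Finally, you correctly flag that you use $\sup_{t,j}d(t,j)<\infty$, $\sup_{t,j}|f(t,j,0)|<\infty$, and a $(t,j)$-uniform Lipschitz constant for $f$; these are not spelled out in (H0), but the paper relies on them throughout (e.g.\ $d_{\max}$ and $f_{\min}$ in the proof of Proposition \ref{principal-eigenvalue-prop}), so treating them as standing hypotheses is the right call.
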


\begin{proof}
It follows from the similar arguments as those in \cite[Proposition 2.2]{CaSh}.
\end{proof}

For given $u,v\in l^{\infty, +}$,  if
$$\{\alpha>1\, :\, \frac{1}{\alpha}v\leq u\leq \alpha v  \big\}\not =\emptyset,
$$
we define $\rho(u,v)$ by
$$\rho(u,v):=\inf\big\{\ln\alpha:\alpha>1,\frac{1}{\alpha}v\leq u\leq \alpha v  \big\}$$
and call
$\rho(u,v)$ the {\it part metric between $u$ and $v$}.

\begin{lemma}[Part metric]
\label{part-metric-lm}
\begin{itemize}
\item[(1)] For given  $u^0,v^0\in l^{\infty,+}$ with $u^0\not =v^0$, if $\rho(u^0,v^0)$ is well defined, then  $\rho(u(t;s,u_0),u(t;s,v_0))$ is also well defined for
every $t>s$ and $\rho(u(t;s,u_0),u(t;s,v_0))$ is non-increasing in $t$.

\item[(2)] For any $\epsilon>0$, $\sigma>0$, $M>0$,  and $\tau>0$  with $\epsilon<M$ and
$\sigma\le \ln \frac{M}{\epsilon}$, there is $\delta>0$   such that
for any $u^0,v^0\in l^{\infty,++}$ with $\epsilon\le u^0_j\le M$, $\epsilon\le v^0_j\le M$ for $j\in\Z$ and
$\rho(u^0,v^0)\ge\sigma$, there holds
$$
\rho(u(\tau+s;s,u^0),u(\tau+s;s,v^0))\le \rho(u^0,v^0)- \delta\quad\forall\,\, s\in\R.
$$

\item[(3)] Suppose that $u^1(t)$ and $u^2(t)$ are two distinct positive entire solutions of \eqref{main-eqn} and
that  there are $c(t)\in\Z$ and $\mu>0$ such that
$$
\lim_{j\to\infty}\frac{u^i_{j+c(t)}(t)}{e^{-\mu j}}=1
$$
uniformly in $t$ ($i=1,2$), and for any $j_0\in \Z$,
$$
\inf_{j\le j_0,t\in\R}u^i_{j+c(t)}(t)>0
$$
for $i=1,2$. Then for any $\tau>0$ and $T\in\R$, there is $\delta>0$ such that
$$
\rho(u^1(s+\tau),u^2(s+\tau))<\rho(u^1(s),u^2(s))-\delta
$$
for $s\le T$.
\end{itemize}
\end{lemma}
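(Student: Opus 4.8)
The plan is to turn the Fisher--KPP structure $f_u<0$ into a scaling comparison. For any solution $v(t)$ of \eqref{main-eqn} and any $\alpha>1$, I would check directly that $\alpha v(t)$ is a super-solution and $\alpha^{-1}v(t)$ a sub-solution: the diffusion part scales linearly, while $f_u<0$ gives $f(t,j,\alpha v_j)<f(t,j,v_j)<f(t,j,\alpha^{-1}v_j)$, so that $\alpha v_j f(t,j,v_j)\ge\alpha v_j f(t,j,\alpha v_j)$ and symmetrically. Hence, whenever $\alpha^{-1}v^0\le u^0\le\alpha v^0$, Lemma \ref{comparison-lm} yields $\alpha^{-1}u(t;s,v^0)\le u(t;s,u^0)\le\alpha u(t;s,v^0)$ for all $t>s$, so $\rho(u(t;s,u^0),u(t;s,v^0))\le\ln\alpha$; taking the infimum over admissible $\alpha$ shows $\rho$ is well defined and bounded by $\rho(u^0,v^0)$. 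Monotonicity in $t$ then follows by running the same estimate from any intermediate time together with the semigroup property.

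\noindent\textbf{Part (2).}
Set $U(t)=u(t+s;s,u^0)$, $V(t)=u(t+s;s,v^0)$ and $\alpha_0=e^{\rho(u^0,v^0)}\in[e^{\sigma},M/\epsilon]$. I would first secure uniform bounds on $[0,\tau]$: comparison with the spatially constant sub/super-solutions $\epsilon e^{-Ct}$ and $\max\{M,M_0\}$ gives $\epsilon'\le V_j(t)\le M'$ for all $j$ and $t\in[0,\tau]$, uniformly in $s$. The upper improvement comes from the defect of the super-solution $\alpha_0V$: with $z_j=\alpha_0V_j-U_j\ge0$ and $h(u)=uf(t,j,u)$,
\[
\dot z_j=\bigl[d(t,j-1)(z_{j-1}-z_j)+d(t,j+1)(z_{j+1}-z_j)\bigr]+h'(\eta_j)z_j+g_j,\quad g_j=\alpha_0V_j\bigl(f(t,j,V_j)-f(t,j,\alpha_0V_j)\bigr).
\]
Because $\alpha_0\ge e^{\sigma}>1$, $V_j\in[\epsilon',M']$, and the strict KPP monotonicity gives $-f_u\ge c_0>0$ on the compact range involved, the defect is uniformly positive, $g_j\ge2\kappa>0$. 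Comparing $z$ with the spatially constant solution $\psi$ of $\dot\psi=-L\psi+2\kappa$, $\psi(0)=0$ (with $L$ bounding $|h'|$), gives $z_j(\tau)\ge\psi(\tau)>0$ uniformly in $j$ and $s$, i.e. $U_j(\tau)\le\alpha_0e^{-\delta}V_j(\tau)$. The symmetric computation with $\alpha_0^{-1}V$ yields $U_j(\tau)\ge\alpha_0^{-1}e^{\delta}V_j(\tau)$, and the two bounds force $\rho(U(\tau),V(\tau))\le\rho(u^0,v^0)-\delta$.

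\noindent\textbf{Part (3).}
Two new features appear. First, Part (1) furnishes a uniform positive floor for the part metric on $(-\infty,T]$: since $\rho(u^1(t),u^2(t))$ is nonincreasing, $\rho(u^1(s),u^2(s))\ge\rho(u^1(T),u^2(T))=:\sigma_T$, and $\sigma_T>0$ because $u^1\not\equiv u^2$ (coincidence at one time would force coincidence for all $t$); moreover the standing bounds together with $w_j:=u^1_j/u^2_j\to1$ at $+\infty$ give $\rho\le\bar\rho<\infty$, so $\alpha_s:=e^{\rho(u^1(s),u^2(s))}\in[e^{\sigma_T},e^{\bar\rho}]$. The genuine obstacle is that the Part (2) defect $g_j=\alpha_su^2_j(f(u^2_j)-f(\alpha_su^2_j))$ now degenerates like $O((u^2_j)^2)$ as $j\to+\infty$, where $u^2_j\to0$, so the naive propagation collapses. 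The resolution is that the exponential normalization forces $w_j(t)\to1$ uniformly in $t$ at $+\infty$; fixing $\eta$ with $1+\eta<e^{\sigma_T}$, there is $J_1$ with $w_j(t)<1+\eta$ whenever $j\ge c(t)+J_1$, so any extremum of $w$ with $\sup_jw_j>1+\eta$ is realized only through indices $j\le c(t)+J_1$, where $u^2_j\ge\epsilon_1>0$ by hypothesis. I would then track $r_j=\alpha_s-w_j\ge0$: its far-field limit $\alpha_s-1\ge e^{\sigma_T}-1>0$ is automatic, while whenever $\inf_jr_j<\alpha_s(1-e^{-\delta})$ (equivalently $\sup_jw_j>\alpha_se^{-\delta}>1+\eta$) the infimum is approached only where $u^2_j\ge\epsilon_1$, so the reaction term $-w_j(w_j-1)u^2_jf_u(\xi_j)\ge c_2>0$ there forces $\inf_jr_j$ to rise at a uniform rate. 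Hence $\inf_jr_j(s+\tau)\ge\alpha_s(1-e^{-\delta})$, i.e. $\sup_jw_j(s+\tau)\le\alpha_se^{-\delta}=e^{\rho(s)-\delta}$, and the symmetric argument on $w_j-\alpha_s^{-1}$ gives $\inf_jw_j(s+\tau)\ge\alpha_s^{-1}e^{\delta}$; together these yield $\rho(u^1(s+\tau),u^2(s+\tau))\le\rho(u^1(s),u^2(s))-\delta$ for $s\le T$.

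\noindent\textbf{Main obstacle.}
The one real difficulty is the degeneracy in Part (3): the contractive defect that is uniformly nondegenerate in Part (2) vanishes at $+\infty$, precisely where both waves decay to $0$. The idea that saves the argument is that the exponential-tail normalization pins the ratio $w$ to $1$ uniformly in the far field, so the extremes of the part metric can only be attained in the region where $u^2$ is bounded below, rendering the degeneracy irrelevant. A recurring technical point, already present in Part (2), is that suprema and infima over $\Z$ need not be attained; I would handle this throughout by comparison with spatially constant sub/super-solutions (and, where a maximizing sequence is needed, by the compactness furnished by Lemma \ref{convergence-lm}) rather than by a pointwise maximum principle.
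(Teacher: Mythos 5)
Your proposal is correct, and it takes essentially the same approach the paper relies on: the paper's proof of this lemma is a one-line deferral to the arguments of \cite[Proposition 2.3]{CaSh}, and those arguments are exactly the ones you reconstruct (scaled solutions $\alpha^{\pm 1}v$ as super/sub-solutions via $f_u<0$ for (1), a uniformly positive defect term plus comparison with a spatially constant function for (2), and the far-field pinning of the ratio $u^1_j/u^2_j$ to $1$ so that the contraction only needs to act where the solutions are bounded below for (3)). The one place where you silently strengthen (H0) --- invoking a uniform-in-$(t,j)$ bound $-f_u\ge c_0>0$ on compact ranges of $u$ --- is a reading the paper itself adopts (e.g.\ in deriving \eqref{eqq-5}), so it is not a gap relative to the paper's own standard of rigor.
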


\begin{proof}
It follows from the similar arguments as those in \cite[Proposition 2.3]{CaSh}.
\end{proof}

\begin{proof}[Proof of Proposition \ref{positive-entire-solution-prop}]
It can be proved by the similar arguments as those in \cite[Theorem 1.1]{CaSh}. We give an outline of the proof in the following.

Consider the  linearization of \eqref{main-eqn} at $0$,
\begin{equation}\label{linearization-eq}
\dot v_j=d(t,j-1)\big(v_{j-1}(t)-v_j(t)\big)+d(t,j+1)\big(v_{j+1}(t)-v_j(t)\big)+f(t,j,0)v_j(t).
\end{equation}
 Let $v(t;s,v^0)$ be the solution of \eqref{linearization-eq} with $v(s;s,v^0)=v^0\in l^\infty$. Then
 for any $v^0\in l^\infty$ with $v^0_j\ge 0$,
$$v_j(t;s,v^0)\ge e^{\int^t_s \inf_{j\in\Z}f(\tau,j,0)d\tau} \inf_{j\in\Z}v^0_j.$$
By (H0) we can find $\epsilon_0>0$ and $T>0$ such that
$$
\frac{\int^{s+T}_s\inf_{j\in\Z}f(\tau,j,0)d\tau}{T}>\epsilon_0\quad \forall\,\, s\in\R.
$$
Note that for the above  $\epsilon_0>0$, there is $\delta_0>0$ such that
$$f(t,j,u)\geq \inf_{j\in\Z}f(t,j,0)-\epsilon_0\quad\mbox{ for all }\,t\in\R\,,|u|\leq \delta_0.$$
It then can be proved that for $0<\delta\ll 1$,
$$u(t;s,v^\delta)\geq e^{\int^t_s\inf_{j\in\Z}f(\tau,j,0)d\tau-\epsilon_0(t-s)}v^\delta\quad\mbox{ for }\,s\in\R,\,t\in[s,s+T],
$$
where $v^\delta_j=\delta$ for all $j\in\Z$.
In particular,
$$
u(s+T;s,v^\delta)\geq e^{\int^{s+T}_s\inf_{j\in\Z}f(\tau,j,0)d\tau-\epsilon_0T}v^\delta\ge v^\delta.
$$
By induction, we have
\begin{equation}
\label{positive-solu-eq1}
u(t;s,v^\delta)\geq e^{\int^t_{s+nT}f(\tau,0)d\tau-\epsilon_0(t-s-nT)}v^\delta\quad\mbox{ for }\,s\in\R,\,t\in[s+nT,s+(n+1)T],
\end{equation}
where $n=0,1,2,\cdots$.

By (H0), $f(t,j,u)<0$ for all $t\in\R$, $j\in\Z$ and $u\ge M_0$. Then
\begin{equation}
\label{positive-solu-eq2}
u(t;s,u^M)<u^M\quad\mbox{ for }\,t>s,
\end{equation}
where $M\ge M_0$ and $u^M_j=M$ for all $j\in\Z$.

Let $M\ge M_0$ and $0<\delta\ll 1$ be fixed. Let
$$u^n(t)=u(t;-nT,u^M),\quad t\ge -nT.$$
Then we get
$$u(t;-(n+1)T,v^\delta)<u^{n+1}(t)<u^n(t),\quad t\ge -nT.$$
Let $$u^+(t)=\lim_{n\to\infty}u^n(t).$$ We have that $u^+(t)$ is an entire solution of \eqref{main-eqn}.
 By \eqref{positive-solu-eq1},
 \begin{equation}
 \label{positive-solu-eq3}
 \inf \limits_{j\in\Z, t\in\R}u^+_j(t)>0.
\end{equation}
 Hence $u^+(t)=\{u^+_j(t)\}_{j\in\Z}$ is a strictly positive  entire solution of \eqref{main-eqn}.

By the same arguments as those in \cite[Theorem 1.1]{CaSh}, for any $u^0\in l^\infty$ with $\inf_{j\in\Z}u^0_j>0$,
$$
\lim_{t\to\infty}\|u(t+s;s,u^0)-u^+(t+s)\|_{l^\infty}=0
$$
uniformly in $s\in\RR$. The proposition then follows.
\end{proof}

\begin{proof}[Proof of Theorem \ref{main-general-thm}]
(1) It can be proved by the similar arguments as those in \cite[Theorem 2.2]{She17}. We give an outline of the proof in the following.

First, note that, for given $u^0$ satisfying \eqref{cond-eq6} and given $t_0\in\RR$,
the part metric $\rho(u^0,U(t_0,\cdot))$ is well defined and then
$\rho(u(t,\cdot;t_0,u_0),U(t+t_0,\cdot))$ is well defined for all $t\ge 0$. By Lemma \ref{part-metric-lm},  to prove \eqref{cond-eq7}, it suffices to prove that
 for any $\epsilon>0$,
\begin{equation}
\label{proof-thm1-eq2}
\rho(u(t+t_0,\cdot;t_0,u^0),U(t+t_0,\cdot))<\epsilon\quad \text{for
some}\,\, t>0.
\end{equation}


Second, assume that there is $\epsilon_0>0$ such that
\begin{equation}
\label{eqq-0} \rho(u(t+t_0,\cdot;t_0,u^0),U(t+t_0,\cdot))\ge
\epsilon_0
\end{equation}
for all $t\ge 0$.
Fix a $\tau>0$. We claim that  if \eqref{eqq-0} holds, then there is $\delta>0$ such that
\begin{align}
\label{main-thm-eq1}
\rho(u(\tau+s+t_0,\cdot;t_0,u^0),U(\tau+s+t_0,\cdot))\le
\rho(u(s+t_0,\cdot;t_0,u^0),U(s+t_0,\cdot))-\delta
\end{align}
for all $s\ge 0$.

In fact, for any $0<\epsilon<\frac{\epsilon_0}{4+2\epsilon_0}$, by \eqref{cond-eq2}, \eqref{cond-eq3},  and \eqref{cond-eq6},
there is $d_1\gg d_1^*$ such that
\begin{align*}
d^*(1-\epsilon) \phi(t_0,j)-d_1 \phi_1(t_0,j)\le u^0_j\le d^*(1+\epsilon) \phi(t_0,j)+d_1 \phi_1(t_0,j).
\end{align*}
By \eqref{cond-eq5}, there holds
\begin{align*}
d^*(1-\epsilon) \phi(t,j)-d_1 \phi_1(t,j)\le u_j(t;t_0,u^0)
\le d^* (1+\epsilon)\phi(t,j)+d_1 \phi_1(t,j)\quad \forall\,\, t\ge t_0.
\end{align*}
By the arguments of \cite[(4.5) and (4.6)]{She17}, for any $s\ge 0$, there is $x_s(\ge X(t_0+s))$
 such that
 \begin{equation}
 \label{eqq-00}
 \sup_{s\ge 0, t\in [t_0+s,t_0+s+\tau]}|x_s-X(t)|<\infty
 \end{equation}
 and
\begin{align}
\label{eqq-1}
\frac{1}{1+\epsilon_0/2}U(t,j)\le u_j(t;t_0,u^0)\le (1+\epsilon_0/2)U(t,j)\quad \forall\,\, t\in [t_0+s,t_0+s+\tau],\,\, j\ge x_s.
\end{align}
By the similar arguments of \cite[(4.7)]{She17}, we can prove that
\begin{equation}
\label{eqq-5-0}
\inf_{s\ge 0, t\in[s+t_0,\tau+s+t_0],j\le x_s} U(t,j)>0.
\end{equation}

For given $s\ge 0$,
let
$$
\rho(s+t_0)=\rho(u(s+t_0,\cdot;t_0,u^0),U(s+t_0,\cdot)).
$$
By \eqref{eqq-0},
\begin{equation}
\label{eqq-3-0} \rho(t_0)\ge \rho(s+t_0)\ge \epsilon_0
\end{equation}
and
\begin{align}
\label{eqq-3}
 \frac{1} {e^{\rho(s+t_0)}}U(s+t_0,\cdot)&\le u(s+t_0,\cdot;t_0,u_0)\le  e^{\rho(s+t_0)} U(s+t_0,\cdot).
\end{align}
It follows from \eqref{eqq-3} and Lemma \ref{comparison-lm}  that
$$
u(t+s+t_0,\cdot;t_0,u^0)\le u(t+s+t_0,\cdot;s+t_0, e^{\rho(s+t_0)}U(s+t_0,\cdot))\quad {\rm for}\quad  t\ge 0.
$$

Let
$$
\hat u_j(t)= u_j(t+s+t_0;s+t_0, e^{\rho(s+t_0)}U(s+t_0,\cdot)),
$$
$$\tilde u_j(t)= e^{\rho(s+t_0)} u_j(t+s+t_0;s+t_0,U(s+t_0,\cdot))\big(= e^{\rho(s+t_0)} U(t+s+t_0,j)\big),
$$
and
$$
\bar u_j(t)=\tilde u_j(t)-\hat u_j(t).
$$
Then
\begin{align}
\label{eqq-4}
\p_t\bar u_j(t)&=\big(\mathcal{A}\bar u(t)\big)_j+\tilde u_j(t) f(t+s+t_0,j,U(t+s+t_0,j))-\hat u_j(t) f(t+s+t_0,j,\hat u_j(t))\nonumber\\
&=\big(\mathcal{A}\bar u(t)\big)_j+p(t,j) \bar u_j(t)+b(t,j),
\end{align}
where
$$
(\mathcal{A}\bar u(t)\big)_j=d(t,j-1)\big(\bar u_{j-1}(t)-\bar u_j(t)\big)+d(t,j+1)\big(\bar u_{j+1}(t)-u_j(t)\big),
$$
$$
p(t,j)=f(t+s+t_0,j,\hat u_j(t))+\tilde  u_j(t)\int_0^1 f_u(t+s+t_0,j,r\tilde u_j(t)+(1-r)\hat u_j(t))dr,
$$
and
\begin{align*}
b(t,j)&=\tilde u_j(t) \big[f(t+s+t_0,j,U(t+s+t_0,j))-f(t+s+t_0,j,\tilde u_j(t))\big]\\
&= \tilde  u_j(t)\big[f(t+s+t_0,j,U(t+s+t_0,j))-f(t+s+t_0,j,  e^{\rho(s+t_0)} U(t+s+t_0,j))\big].
\end{align*}
By (H0) and \eqref{eqq-5-0}, there is $b_0>0$ such that for any $s\ge 0$,
\begin{equation}
\label{eqq-5}
\inf_{t\in [s+t_0,\tau+s+t_0],j\le x_s} b(t,x)\ge b_0>0.
\end{equation}

Note that
$$\bar u_{j-1}(t)\ge 0,\quad \bar u_{j+1}(t)\ge 0.
$$
Hence for $j\le x_s$,
$$
\p_t \bar u_j(t)\ge (-d(t,j-1)-d(t,j+1)+ p(t,j))\bar u_j(t)+b_0.
$$
This implies that
$$
\bar u_j(\tau)\ge \int_{s+t_0}^{\tau+s+t_0} e^{\big(-2d_{\sup}+p_{\inf}\big)(\tau+s+t_0-r)}b_0 dr\quad \forall\,\, j\le x_s.
$$
This together with \eqref{eqq-5} implies that there is $\delta_0>0$ such that for any $s\ge 0$,
\begin{equation*}
\bar u_j(\tau)\ge \delta_0\quad \forall \,\, j\le x_s
\end{equation*}
and then
\begin{equation}
\label{eqq-6}
u_j(\tau+s+t_0;t_0,u_0)\le  e^{\rho(s+t_0)}U(t+s+t_0,j)-\delta_0\quad \forall\,\, j\le x_s.
\end{equation}

By \eqref{eqq-1} and \eqref{eqq-6},
then there is $\delta_1>0$ such that
$$
u(\tau+s+t_0;t_0,u^0)\le  e^{\rho(s+t_0)-\delta_1} U(\tau+s+t_0,\cdot)\quad \text{for all}\,\, s\ge 0.
$$
Similarly, we can prove that there is $\delta_2>0$ such that
$$
 \frac{1}{e^{\rho(s+t_0)-\delta_2}}U(\tau+s+t_0,\cdot)\le u(\tau+s+t_0;t_0,u^0)\quad \text{for all}\,\, s\ge 0.
$$
The claim \eqref{main-thm-eq1} then holds for
 $\delta=\min\{\delta_1,\delta_2\}$.

Now we prove that \eqref{main-thm-eq1} gives rise to a contradiction.
In fact, assume \eqref{main-thm-eq1}. Then  we have
$$
\rho(u(n\tau+t_0;t_0,u^0),U(n\tau+t_0,\cdot))\le
\rho(u(t_0;t_0,u^0),U(t_0,\cdot))-n\delta
$$
for all $n\ge 0$. Letting $n\to\infty$, we have  $\rho(u(n\tau+t_0;t_0,u^0),U(n\tau+t_0,\cdot))\to -\infty$, which is
a contradiction. Therefore, \eqref{eqq-0} does not hold and then for any $\epsilon>0$,
$$
\rho(u(t+t_0;t_0,u^0),U(t+t_0,\cdot))<\epsilon\quad \text{for
some}\,\, t>0.
$$
This together with  Lemma \ref{part-metric-lm} implies that
$$
\lim_{t\to\infty}\rho(u(t+t_0,\cdot;t_0,u^0),U(t+t_0,\cdot))=0.
$$
(1) then follows.

(2) Assume that $u_j(t)=V(t,j)$ is also a transition wave and satisfies
that
$$
\lim_{j\to\infty}\frac{V(t,j+X(t))}{U(t,j+X(t))}=1
$$
uniformly in $t$. To prove $V(t,j)\equiv U(t,j)$, it suffices to prove that for any $\epsilon>0$,
$$
\rho(U(t,\cdot),V(t,\cdot))<\epsilon\quad \text{for all}\,\, t\in\RR.
$$

Assume that there are $\epsilon_0>0$ and $t_0\in\RR$ such that
\begin{equation}
\label{general-eq-2}
\rho(U(t_0,\cdot),V(t_0,\cdot))\ge \epsilon_0.
\end{equation}
Then by Lemma \ref{part-metric-lm},
\begin{equation}
\label{general-eq-2-0}
\rho(U(t,\cdot),v(t,\cdot))\ge \epsilon_0\quad \text{for all}\,\, t\le t_0.
\end{equation}
Let $\tau=1$ and $t_n=t_0-n$.
Note that,  for any $\epsilon>0$, there is $J\in\Z^+$ such that
\begin{equation}
\label{general-eq-2-1}
(1-\epsilon) U(t,j+X(t)) <V(t,j+X(t))<(1+\epsilon)U(t,j+X(t))\quad \forall\,\, j\ge J,\,\, t\in\RR
\end{equation}
and
\begin{equation}
\label{general-eq-2-2}
\begin{cases}
U(t,j+X(t))\ge u^+(t,j+X(t))-\epsilon\quad \forall  \, \, j\le -J,\,\, t\in\RR\cr
U(t,j+X(t))\le \epsilon\quad \forall\,\, j\ge J,\,\, t\in\RR.
\end{cases}
\end{equation}
It follows that $X(t)$ is also a front location function of $V(t,j)$. By the similar arguments of \cite[(4.7)]{She17}, we can prove that
\begin{equation}
\label{general-eq-2-3}
\inf_{t\in\RR, j\le J} U(t,j+X(t))>0\quad {\rm and}\quad \inf_{t\in\RR,j\le J} V(t,j+X(t))>0.
\end{equation}
This implies that there is $\rho_0>0$ such that
\begin{equation}
\label{general-eq-2-4}
\rho(U(t,\cdot),V(t,\cdot))\le \rho_0\quad \text{for all}\,\, t\in\RR.
\end{equation}

By the arguments of \eqref{main-thm-eq1} and \eqref{general-eq-2-0}-\eqref{general-eq-2-3}, there is $\delta>0$ such that
$$
\rho(U(t_n+\tau,\cdot),V(t_n+\tau,\cdot))\le \rho(U(t_n,\cdot)-\delta.
$$
This implies that
\begin{equation}
\label{general-eq-2-5}
\rho(U(t_0,\cdot),V(t_0,\cdot))\le \rho(U(t_n,\cdot),V(t_n,\cdot))-n\delta \quad \text{for all}\,\, n\in\mathbb{N}.
\end{equation}
By \eqref{general-eq-2-4} and \eqref{general-eq-2-5},
$$
\rho(U(t_0,\cdot),V(t_0,\cdot))<0,
$$
which is a contradiction. Hence the assumption \eqref{general-eq-2} does not hold and
$\rho(U(t,\cdot),V(t,\cdot))<\epsilon$ for all $\epsilon>0$ and all $t\in\RR$. Therefore,
$U(t,j)\equiv V(t,j)$ and (2) follows.
\end{proof}

\section{Existence, stability and uniqueness of periodic traveling wave solutions}

In this section, we assume that $d(t+T,j)=d(t,j+J)=d(t,j)$ and
$f(t+T,j,u)=f(t,j+J,u)=f(t,j,u)$, and study the existence, stability, and uniqueness of periodic traveling wave solutions of \eqref{main-eqn}.

To state the main results of this section, we first present two propositions.
For any $\mu\in\RR$, consider the following linear equation,
\begin{equation}
\label{periodic-linear-eq1}
\dot v_j(t)=d(t,j-1)\big( e^\mu v_{j-1}(t)-v_j(t)\big)+d(t,j+1)\big(e^{-\mu} v_{j+1}(t)-v_j(t)\big)+f(t,j,0) v_j(t).
\end{equation}
Note that \eqref{periodic-linear-eq1} with $\mu=0$ is the linearized equation of \eqref{main-eqn} at $u\equiv 0$.

\begin{proposition}
\label{principal-eigenvalue-prop}
\begin{itemize}
\item[(1)] For any $\mu\in\RR$, there are $\lambda(\mu)\in\RR$ and $\psi^\mu(t,j)$ with $\psi^\mu(t+T,j)=\psi^\mu(t,j+J)=\psi^\mu(t,j)>0$,
$\|\psi^\mu(0,\cdot)\|_{l^\infty}=1$,  such that
$v_j(t)=e^{\lambda(\mu) t}\psi^\mu(t,j)$ is a solution of \eqref{periodic-linear-eq1}.

\item[(2)] There is $\mu^*>0$  such that
\begin{equation*}
\frac{\lambda(\mu^*)}{\mu^*}=\inf_{\mu>0}\frac{\lambda(\mu)}{\mu},\quad  \frac{\lambda(\mu)}{\mu}>\frac{\lambda(\mu^*)}{\mu^*}\quad {\rm for}\quad 0<\mu<\mu^*.
\end{equation*}
\end{itemize}
\end{proposition}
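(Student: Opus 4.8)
The plan is to read part (1) as a Floquet/principal-eigenvalue problem and part (2) as a convex-analysis argument on the function $\lambda(\mu)$ produced by part (1). For part (1), I would first use the spatial $J$-periodicity to collapse the infinite lattice onto the finite cyclic lattice $\Z/J\Z$. Substituting the Floquet ansatz $v_j(t)=e^{\lambda t}\psi_j(t)$ into \eqref{periodic-linear-eq1} shows that $v$ solves it precisely when the $J$-periodic (in $j$) vector $\psi$ satisfies the $T$-periodic linear system $\dot\psi=L_\mu(t)\psi-\lambda\psi$ on $\RR^J$, where
$$(L_\mu(t)\psi)_j=d(t,j-1)e^{\mu}\psi_{j-1}+d(t,j+1)e^{-\mu}\psi_{j+1}-\big(d(t,j-1)+d(t,j+1)\big)\psi_j+f(t,j,0)\psi_j.$$
Because $\inf d>0$, the off-diagonal entries $d(t,j\mp 1)e^{\pm\mu}$ are positive, so $L_\mu(t)$ is cooperative and irreducible; hence the monodromy matrix $\Phi_\mu(T)$ of $\dot\psi=L_\mu(t)\psi$ is strongly positive. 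Perron--Frobenius then gives a simple dominant eigenvalue $r(\mu)>0$ with positive eigenvector $\psi^0$. Setting $\lambda(\mu):=\frac1T\ln r(\mu)$ and $\psi^\mu(t):=e^{-\lambda(\mu)t}\Phi_\mu(t)\psi^0$, one checks $\psi^\mu$ is $T$-periodic, strictly positive (the cooperative flow preserves the positive cone), and $J$-periodic in $j$; normalizing $\|\psi^\mu(0,\cdot)\|_{l^\infty}=1$ yields the solution $v_j(t)=e^{\lambda(\mu)t}\psi^\mu_j(t)$ of part (1).

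For part (2) I would establish four properties of $\lambda(\cdot)$. (i) \emph{Regularity}: $L_\mu(t)$ depends analytically on $\mu$ through $e^{\pm\mu}$ and $r(\mu)$ is simple, so analytic perturbation theory makes $\mu\mapsto\lambda(\mu)$ real-analytic. (ii) \emph{Positivity at $0$}: at $\mu=0$ equation \eqref{periodic-linear-eq1} is the linearization of \eqref{main-eqn} at $0$, and the growth estimate in the proof of Proposition \ref{positive-entire-solution-prop} together with \eqref{assumption-eq} forces $\lambda(0)>0$. (iii) \emph{Growth}: summing the equation for the positive eigenfunction over $j\in\Z/J\Z$ and using $\inf d>0$ gives a bound of the form $\lambda(\mu)\ge(\inf d)\,e^{\mu}-C$, so $\lambda(\mu)/\mu\to\infty$ as $\mu\to\infty$; with (ii) this also gives $\lambda(\mu)/\mu\to\infty$ as $\mu\to0^+$. (iv) \emph{Convexity}: $\mu\mapsto\lambda(\mu)$ is convex.

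With these, set $c(\mu)=\lambda(\mu)/\mu$ on $(0,\infty)$. By (ii) and (iii), $c(\mu)\to\infty$ as $\mu\to0^+$ and as $\mu\to\infty$, so $c$ attains a global minimum at some $\mu^*\in(0,\infty)$. Writing $h(\mu)=\mu\lambda'(\mu)-\lambda(\mu)$ we have $c'(\mu)=h(\mu)/\mu^2$ and $h'(\mu)=\mu\lambda''(\mu)\ge 0$ by (iv), so $h$ is nondecreasing with $h(0^+)=-\lambda(0)<0$ and $h(\mu)\to+\infty$. Being real-analytic and nonconstant, $h$ vanishes at exactly one point, necessarily $\mu^*$; hence $h<0$ on $(0,\mu^*)$, i.e. $c$ is strictly decreasing there, which gives $\lambda(\mu)/\mu>\lambda(\mu^*)/\mu^*$ for $0<\mu<\mu^*$, as required.

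The main obstacle is property (iv). I expect to prove convexity by the geometric-mean test-function method: given $\mu_0,\mu_1$ and $\theta\in[0,1]$ with $\mu_\theta=(1-\theta)\mu_0+\theta\mu_1$, form $\Psi_j=(\psi^{\mu_0}_j)^{1-\theta}(\psi^{\mu_1}_j)^{\theta}$ and apply the weighted inequality $a^{1-\theta}b^{\theta}\le(1-\theta)a+\theta b$ to each coupling term, using $e^{\mu_\theta}=(e^{\mu_0})^{1-\theta}(e^{\mu_1})^{\theta}$, to show that $\Psi$ is a positive $T$-periodic super-solution of $\dot\psi=L_{\mu_\theta}(t)\psi-\Lambda\psi$ with $\Lambda=(1-\theta)\lambda(\mu_0)+\theta\lambda(\mu_1)$; the super-solution characterization of the principal eigenvalue then yields $\lambda(\mu_\theta)\le\Lambda$. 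Verifying this chain of inequalities and confirming that the characterization produces the correct bound is the delicate step; the remaining Floquet and convex-analysis bookkeeping is routine.
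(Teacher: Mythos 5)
Your part (1) is essentially the paper's argument: the paper also restricts \eqref{periodic-linear-eq1} to the space $l^\infty_{\rm per}$ of $J$-periodic sequences (your $\RR^J$ reduction), observes that the period map $\Phi(T,0)$ is strongly positive and compact there, and applies Krein--Rutman (your Perron--Frobenius on the monodromy matrix) to define $\lambda(\mu)=\frac{1}{T}\ln r(\Phi(T,0)|_{l^\infty_{\rm per}})$ and $\psi^\mu(t,\cdot)=e^{-\lambda(\mu)t}\Phi(t,0)\psi^*$. For part (2) you share the paper's two coercivity facts, namely $\lambda(\mu)/\mu\to\infty$ as $\mu\to\infty$ (the paper uses $\lambda(\mu)\ge d_{\min}(e^\mu-1)+d_{\max}(e^{-\mu}-1)+f_{\min}$; your summed bound $\lambda(\mu)\ge (\inf d)\,e^\mu-C$ serves the same purpose) and $\lambda(\mu)/\mu\to\infty$ as $\mu\to 0^+$ (via \eqref{assumption-eq}, i.e.\ $\lambda(0)>0$ plus continuity), but from there the routes diverge. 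The paper stops at the two limits ("the conclusion then follows"): by continuity the infimum of $c(\mu)=\lambda(\mu)/\mu$ is attained on a compact subset of $(0,\infty)$, and taking $\mu^*$ to be the \emph{smallest} minimizer yields the strict inequality $\lambda(\mu)/\mu>\lambda(\mu^*)/\mu^*$ on $(0,\mu^*)$ for free, since any $\mu<\mu^*$ is by definition not a minimizer. You instead invoke analytic perturbation theory and prove convexity of $\lambda$ by the geometric-mean supersolution trick, then analyze $h(\mu)=\mu\lambda'(\mu)-\lambda(\mu)$. Your route is correct: the weighted AM--GM step does make $\Psi(t,j)=\psi^{\mu_0}(t,j)^{1-\theta}\psi^{\mu_1}(t,j)^{\theta}$ a positive $T$-periodic supersolution of $\dot\psi=L_{\mu_\theta}(t)\psi-\Lambda\psi$, and the supersolution characterization you need is exactly the Collatz--Wielandt bound (if $M x\le e^{\Lambda T}x$ with $x\gg 0$ and $M$ the nonnegative monodromy matrix, then $r(M)\le e^{\Lambda T}$), which closes the argument. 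What your heavier machinery buys is strictly more than the proposition asserts: uniqueness of the minimizer $\mu^*$ and strict monotonicity of $\lambda(\mu)/\mu$ on $(0,\mu^*)$; what the paper's elementary route buys is brevity, since the "smallest minimizer" observation already delivers the stated strict inequality without convexity or analyticity.
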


\begin{proof}
(1) Let
$$
l^\infty_{\rm per}=\{u\in l^\infty\,|\, u_{j+J}=u_j\quad \forall\,\, j\in\Z\}.
$$
For given $u,v\in l^\infty_{\rm per}$, define
$$
u\ll (\gg) v\quad {\rm if}\quad u_j<(>)v_j\quad \forall\,\, j\in\Z.
$$
Let  $\Phi(t,s)$ be the solution operator of \eqref{periodic-linear-eq1}, that is,
$$
\Phi(t,s)v^0=v(t;s,v^0),
$$
where $v(t;s,v^0)$ is the solution of \eqref{periodic-linear-eq1} with $v(s;s,v^0)=v^0\in l^\infty$. Then we have
$$
\Phi(t,s)l^\infty_{\rm per}\subset l^\infty_{\rm per}\quad \forall\,\, t\ge s,
$$
and for any $v^0\in \big(l^{\infty,+}\cap l^\infty_{\rm per}\big)\setminus\{0\}$,
$$
\Phi(t,s)v^0\gg 0\quad \quad \forall t>s.
$$
It is clear that any bounded set $E\subset l^\infty_{\rm per}$, $\Phi(T,0)E$ is relatively compact. Hence by the Krein-Rutman Theorem (see \cite{Hirsch05}),
the spectral radius $r(\Phi(T,0)|_{l_{\rm per}^\infty})$ is an isolated algebraic simple eigenvalue of $\Phi(T,0)|_{l_{\rm per}^\infty}$
with a positive eigenfunction $\psi^*\in l^\infty_{\rm per}$, $\|\psi^*\|_{l^\infty}=1$.
(1) follows with
$$\lambda(\mu)=\frac{\ln r(\Phi(T,0)|_{l_{\rm per}^\infty})}{T}$$
 and
 $$\psi^{\mu}(t,\cdot)=e^{-\lambda(\mu) t}\Phi(t,0)\psi^*.
 $$

(2) Note that $$\lambda(\mu)\ge d_{\min}(e^\mu-1)+d_{\max}(e^{-\mu}-1)+f_{min},$$
where $d_{\min}=\inf_{j\in\Z,\,t\in\R}d(t,j)$, $d_{\max}=\sup_{j\in\Z,\,t\in\R}d(t,j)$, $f_{\min}=\inf_{j\in\Z,\,t\in\R} f(t,j,0)$. We then have
$$\frac{\lambda(\mu)}{\mu}\ge \frac{d_{\min}(e^\mu-1)+d_{\max}(e^{-\mu}-1)+f_{min}}{\mu}\to\infty\quad {\rm as}\quad \mu\to\infty.
 $$
 By $\eqref{assumption-eq}$, we have
 $$\frac{\lambda(\mu)}{\mu}\to\infty\quad {\rm as}\quad  \mu\to 0^+.
 $$
  The conclusion then follows.
\end{proof}

Let $$
c^*=\frac{\lambda(\mu^*)}{\mu^*}.
$$
Then for any $c>c^*$, there is $\mu\in (0,\mu^*)$ such that
$$
c=\frac{\lambda(\mu)}{\mu}.
$$
For given $c>c^*$, let $0<\mu<\mu^{'}<\min\{2 \mu, \mu^*\}$ be such that $c=\frac{\lambda(\mu)}{\mu}$ and $\frac{\lambda(\mu)}{\mu}>\frac{\lambda(\mu^{'})}{\mu^{'}}>c^*$.

Consider the space shifted equations of \eqref{main-eqn},
\begin{equation}\label{space-shifted-eqn}
\dot{u}_{j}(t)=H(i)u_j(t)+u_j(t)f(t,j+i,u_{j}(t)),
\end{equation}
where
$$
 H(i)u_j(t)=d(t,j+i+1)\big(u_{j+1}(t)-u_j(t)\big)+d(t,j+i-1)\big(u_{j-1}(t)-u_j(t)\big), \quad j\in\Z
 $$
 for any $i\in\Z$. Let $u(t,j;u_0,i)$ be the solution of \eqref{space-shifted-eqn} with $u(0,j;u_0,i)=u_0(j)$ for $u_0\in l^\infty$.

For given  $d,d_1>0$, let
$$ \underline{v}(t,j;i,d,d_1)= d e^{-\mu(j-ct)}\psi^\mu(t,j+i)-d_1e^{-\mu^{'}(j-ct)}\psi^{\mu^{'}}(t,j+i).$$
Observe that for given $0<b\ll1$, there are $M>N>0$ such that
\begin{equation}\label{sub-inequ}
{ b\psi^0(t,j+i)}\le\underline{v} (t,j;i,d,d_1)\quad\quad\quad\forall N\le j-ct\le M.\end{equation}
Let $b>0$ and $M>0$ be such that $\eqref{sub-inequ}$ holds, and let
\begin{equation}\label{sub-solution}
\underline{u} (t,j;i,d,d_1,b)=\begin{cases}\max\{{ b\psi^0(t,j+i)},\underline{v} (t,j;i,d,d_1)\},\quad j\le M+ct\cr
\underline{v} (t,j;i,d,d_1),\quad j\ge M+ct.
\end{cases}
\end{equation}

\begin{proposition}
\label{sub-solution-prop}
 Let $0<d\le 2$.
 \begin{itemize}
\item[(1)] For any $i\in\Z$ and  $\frac{d_1}{d}\gg1$, $\underline{v} (t,j;i,d,d_1)$ is a sub-solution of \eqref{space-shifted-eqn}.
\item[(2)] For any $i\in\Z$ and $0<b\ll1$, $u(t,j;i):=b\psi^0(t,j+i)$ is a sub-solution of \eqref{space-shifted-eqn}.
\item[(3)] For  $\frac{d_1}{d}\gg1$ and $0<b\ll1$, $u(t,j;\underline{u} (0,\cdot;i,d,d_1,b),i)\ge\underline{u} (t,j;i,d,d_1,b)$ for $t\ge0$.
\end{itemize}
\end{proposition}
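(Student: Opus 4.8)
The plan is to treat the three claims in turn; everything rests on one linear computation. Fix $i\in\Z$, write $\xi=j-ct$, and set
$$\phi_j(t)=e^{-\mu\xi}\psi^\mu(t,j+i),\qquad \phi^1_j(t)=e^{-\mu'\xi}\psi^{\mu'}(t,j+i),$$
so that $\underline{v}(t,j;i,d,d_1)=d\,\phi_j(t)-d_1\phi^1_j(t)$. Substituting these into the $i$-shifted linearization and using that $e^{\lambda(\mu)t}\psi^\mu(t,j)$ and $e^{\lambda(\mu')t}\psi^{\mu'}(t,j)$ solve \eqref{periodic-linear-eq1}, I would first record the two identities
$$\dot\phi_j=H(i)\phi_j+f(t,j+i,0)\phi_j,\qquad \dot\phi^1_j=H(i)\phi^1_j+f(t,j+i,0)\phi^1_j+\kappa\,\phi^1_j,$$
where $\kappa:=c\mu'-\lambda(\mu')>0$ since $c=\lambda(\mu)/\mu>\lambda(\mu')/\mu'$. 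The first is an equality because $c\mu=\lambda(\mu)$ exactly; the mismatch $c\mu'\neq\lambda(\mu')$ is what produces the positive slack $\kappa$, and this slack is the engine of the whole construction.

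For (1), note that $\psi^\mu,\psi^{\mu'}$ are positive, continuous and doubly periodic, hence bounded between positive constants; let $L$ be the Lipschitz constant of $f$ in $u$ from (H0). The identities give $\dot{\underline{v}}_j=H(i)\underline{v}_j+f(t,j+i,0)\underline{v}_j-d_1\kappa\,\phi^1_j$, so $\underline{v}$ is a sub-solution of \eqref{space-shifted-eqn} exactly when
$$\underline{v}_j\big[f(t,j+i,0)-f(t,j+i,\underline{v}_j)\big]\le d_1\kappa\,\phi^1_j\qquad(\forall\,j,\ \text{a.e. }t).$$
Where $\underline{v}_j\le 0$ the bracket vanishes by (H0) (since $f(t,\cdot,u)=f(t,\cdot,0)$ for $u\le0$) and the inequality is automatic. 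The substantive case is $\underline{v}_j>0$, which forces $\xi$ to exceed a threshold tending to $+\infty$ as $d_1/d\to\infty$; there I would bound the left side by $L\underline{v}_j^2\le Ld^2 e^{-2\mu\xi}(\psi^\mu)^2$ and compare with the right side $d_1\kappa e^{-\mu'\xi}\psi^{\mu'}$. Since $\mu'<2\mu$, setting $\gamma=(2\mu-\mu')/(\mu'-\mu)>0$ and using $e^{(\mu'-\mu)\xi}\gtrsim d_1/d$ on $\{\underline{v}_j>0\}$, the ratio of the two sides is $O\big(d^{2+\gamma}d_1^{-1-\gamma}\big)\to0$ as $d_1/d\to\infty$ (the bound $0<d\le2$ keeping the constants uniform). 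This positive-region balance is the one genuine obstacle; everything else is bookkeeping.

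Claim (2) is the same computation with $\mu=0$: for $u_j=b\psi^0(t,j+i)$ one gets $\dot u_j=H(i)u_j+f(t,j+i,0)u_j-\lambda(0)u_j$, with $\lambda(0)>0$ by \eqref{assumption-eq} (cf. Proposition \ref{principal-eigenvalue-prop}). Hence $u$ is a sub-solution iff $f(t,j+i,0)-f(t,j+i,b\psi^0)\le\lambda(0)$, and since the left side is at most $Lb\,\psi^0\le Lb\,\|\psi^0\|$ with $\|\psi^0\|:=\sup_{t,j}\psi^0(t,j)<\infty$, any $0<b\le\lambda(0)/(L\|\psi^0\|)$ works.

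For (3) I would invoke the comparison principle, Lemma \ref{comparison-lm}(1) applied to the shifted equation: it suffices to check that $\underline{u}(\cdot,\cdot;i,d,d_1,b)$ is a bounded sub-solution of \eqref{space-shifted-eqn} on $[0,\infty)$, for then comparing it with the genuine solution issuing from its own datum $\underline{u}(0,\cdot)$ yields $u(t,j;\underline{u}(0,\cdot),i)\ge\underline{u}(t,j)$. Boundedness holds because $\underline{v}\to-\infty$ as $\xi\to-\infty$ (so $\underline{u}=b\psi^0$ there) while $\underline{u}=\underline{v}\to0$ as $\xi\to+\infty$. For the sub-solution property I would use that on the lattice the pointwise maximum of two sub-solutions $p,q$ is again a sub-solution: at a site where $\underline{u}_j=p_j\ge q_j$, the bounds $p_{j\pm1}\le\underline{u}_{j\pm1}$, $p_j=\underline{u}_j$ together with $d>0$ give
$$\dot{\underline{u}}_j=\dot p_j\le H(i)p_j+p_jf(t,j+i,p_j)\le H(i)\underline{u}_j+\underline{u}_jf(t,j+i,\underline{u}_j).$$
With $p=b\psi^0(\cdot,\cdot+i)$ and $q=\underline{v}$ (sub-solutions by (2) and (1)) this handles $j\le M+ct$; on the truncation region $j\ge M+ct$ the two candidates already coincide with $\underline{v}$ near the interface by \eqref{sub-inequ}, and since $\underline{u}\ge\underline{v}$ everywhere the same monotonicity of the diffusion term shows $\underline{u}=\underline{v}$ stays a sub-solution there. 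Thus $\underline{u}$ is a bounded sub-solution on $[0,\infty)$ and (3) follows.
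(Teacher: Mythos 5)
Your parts (1) and (2) are correct and essentially identical to the paper's argument: the same eigenfunction identities (exact for $\phi$ because $c\mu=\lambda(\mu)$, with positive slack $\kappa=c\mu'-\lambda(\mu')$ for $\phi^1$), the same dichotomy between $\underline{v}\le 0$ (trivial by (H0)) and $\underline{v}>0$, and the same control of the quadratic error $L\underline{v}^2\le L d^2 e^{-2\mu\xi}(\psi^\mu)^2$ by the slack term $d_1\kappa e^{-\mu'\xi}\psi^{\mu'}$ using $\mu<\mu'<2\mu$. The only cosmetic difference is that the paper forces $\xi\ge 0$ on $\{\underline{v}>0\}$ (via $d_1/d\ge \max\psi^\mu/\min\psi^{\mu'}$) and then uses $e^{(\mu'-2\mu)\xi}\le 1$, while you exploit the sharper bound $e^{(\mu'-\mu)\xi}\gtrsim d_1/d$; either closes the estimate. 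Part (3) is where you genuinely diverge. The paper never asserts that the truncated maximum $\underline{u}$ is itself a sub-solution: it introduces weighted differences $\tilde w=e^{Ct}(u-\underline{v})$, $\bar w=e^{Ct}(u-b\psi^0)$, $w=e^{Ct}(u-\underline{u})$, derives the integral inequalities \eqref{difference-ineqn1}--\eqref{difference-ineqn2}, uses $w=\min\{\tilde w,\bar w\}$ for $j\le M+ct$ and $w=\tilde w$ for $j\ge M+ct$ to obtain a single integral inequality for $w$, and concludes $w\ge 0$ by the arguments of Lemma \ref{comparison-lm}. You instead prove that $\underline{u}$ is an (a.e.) sub-solution of \eqref{space-shifted-eqn} — using that on a lattice with positive off-diagonal coupling the pointwise maximum of two sub-solutions is again a sub-solution, and that monotonicity of $H(i)$ together with $\underline{u}\ge\underline{v}$ preserves the sub-solution property on the truncation region — and then apply Lemma \ref{comparison-lm}(1) once. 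Your route is shorter and isolates a reusable structural fact; the paper's route keeps everything at the level of integral inequalities for differences (which meshes directly with how Lemma \ref{comparison-lm} is proved) at the cost of the auxiliary weight $e^{Ct}$ and the bookkeeping with $\tilde a$, $\bar a$.

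One point you should make explicit, since it is the only place your argument could leak: when the active branch at a site $j\le M+ct$ is $p=b\psi^0$, your inequality $p_{j\pm 1}\le \underline{u}_{j\pm 1}$ requires $b\psi^0\le\underline{u}$ at the neighbors, and a neighbor may lie in the region $\{j\ge M+ct\}$ where $\underline{u}=\underline{v}$ and, far out, $\underline{v}<b\psi^0$. The fix is to note that the buffer in \eqref{sub-inequ} can be taken of width at least one: if $b\psi^0_j\ge \underline{v}_j$ and $j-ct\le M$, then \eqref{sub-inequ} forces $j-ct<N$, hence $j\pm1-ct<N+1\le M$ provided one chooses $M\ge N+1$, so the neighbors are still in the region where $\underline{u}\ge b\psi^0$. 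Since for $0<b\ll 1$ the inequality \eqref{sub-inequ} holds on an interval $[N,M]$ with $M-N$ as large as desired, this is a one-line addition (and the paper's own derivation of the second displayed integral inequality for $\bar w$ on $\{j\le M+ct\}$ quietly needs the same elbow room, since it uses $\bar w-w\ge 0$ at neighboring sites and at earlier times).
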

\begin{proof}
(1) First of all, let  $\varphi=de^{-\mu(j-ct)}\psi^\mu(t,j+i)$ and $ \varphi_1= d_1e^{-\mu^{'}(j-ct)}\psi^{\mu^{'}}(t,j+i)$. Let $\bar{M}= d\max\limits_{t\in\R,j\in\Z}\psi^\mu(t,j)$. Let $L>0$ be such that $-f_u(j+i,u)\le L$ for $0\le u\le \bar{M}$. Let $d_0$ be defined by
$$d_0=\max\{\frac{\max\limits_{t\in\R,j\in\Z}\psi^\mu(t,j)}{\min\limits_{t\in\R,j\in\Z}\psi^{\mu^{'}}(t,j)},\frac{L\max\limits_{t\in\R,j\in\Z}[\psi^\mu(t,j)]^2}{(\mu^{'}c-\lambda(\mu^{'}))\min\limits_{t\in\R,j\in\Z}\psi^{\mu^{'}}(t,j)}\}$$

Fix $i\in\Z$. We prove that $\underline{v} (t,j;i,d,d_1)$ is a sub-solution of \eqref{space-shifted-eqn} for $\frac{d_1}{d}\ge d_0$. First, for $(t,j)\in\R\times\Z$ with $\underline{v} (t,j;i,d,d_1)\le 0$,  by (H0), $f(t,j+i,\underline{v} (t,j;i,d,d_1))=f(t,j+i,0)$. Hence
\begin{equation*}
\underline{v}_t-[ H(i)\underline{v}(t,j;i,d,d_1)+\underline{v}(t,j;i,d,d_1)f(t,j+i,\underline{v} (t,j;i,d,d_1))]=-(\mu^{'}c-\lambda(\mu^{'}))\varphi_1\le0.
\end{equation*}

Next, consider $(t,j)\in\R\times\Z$ with $\underline{v} (t,j;i,d,d_1)> 0$. By $\frac{d_1}{d}\ge d_0$, we must have $j-ct\ge0$. Then $ \underline{v}(t,j;i,d,d_1)\le de^{-\mu(j-ct)}\psi^\mu(t,j+i)\le d\psi^\mu(t,j+i)\le \bar{M}$. Note that for $0<y<\bar{M}$,
\begin{align*}
-(\mu^{'}c-\lambda(\mu^{'}))-f_u(t,j+i,y)\frac{(\varphi)^2}{\varphi_1}&\le -(\mu^{'}c-\lambda(\mu^{'}))+L\frac{(\varphi)^2}{\varphi_1}\\&= -(\mu^{'}c-\lambda(\mu^{'}))+ \frac{Ld[\psi^\mu(t,j+i)]^2}{d_1 \psi^{\mu^{'}}(t,j+i)}e^{(\mu^{'}-2\mu)(j-ct)}\\&\le -(\mu^{'}c-\lambda(\mu^{'}))+ \frac{Ld\max\limits_{t\in\R,j\in\Z}[\psi^\mu(t,j)]^2}{d_1\min\limits_{t\in\R,j\in\Z}\psi^{\mu^{'}}(t,j)}\\&\le 0.
\end{align*}
Therefore, for  $(t,j)\in\R\times\Z$ with $\underline{v} (t,j;i,d,d_1)> 0$,
\begin{align*}
&\underline{v}_t-[H(i)\underline{v}(t,j;i,d,d_1)+\underline{v}(t,j;i,d,d_1)f(t,j+i,\underline{v}(t,j;i,d,d_1))]\\=&(\mu c-\lambda(\mu))\varphi-(\mu^{'}c-\lambda(\mu^{'}))\varphi_1+\underline{v}(t,j;i,d,d_1)f(t,j+i,0)-\underline{v}(t,j;i,d,d_1)f(t,j+i,\underline{v})\\=&-(\mu^{'}c-\lambda(\mu^{'}))\varphi_1-f_u(t,j+i,y)(\varphi-\varphi_1)^2\quad\quad\mbox{(for some }y\in(0,\bar{M}) )\\\le&-(\mu^{'}c-\lambda(\mu^{'}))\varphi_1-f_u(t,j+i,y)(\varphi)^2\\=&[-(\mu^{'}c-\lambda(\mu^{'}))-f_u(t,j+i,y)\frac{(\varphi)^2}{\varphi_1}]\varphi_1\\\le&0.
\end{align*}
(1) then follows.

(2)
Fix $i\in\Z$. Observe that
$$ H(i)\psi^0(t,j+i)+f(t,j+i,0)\psi^0(t,j+i)-\psi^0_t(t,j+i)=\lambda(0)\psi^0(t,j+i)\quad\forall\, j\in\Z.$$
Observe also that $\max\limits_{t\in\R,j\in\Z}\lambda(0)\psi^0(t,j+i)>0$ and then
$$\lambda(0)b\psi^0(t,j+i)\ge (f(t,j+i,0)-f(t,j+i,b\psi^0(t,j+i)))b\psi^0(t,j+i)\quad\forall\,0<b\ll1.$$
It then follows that
$$ H(i)b\psi^0(t,j+i)+f(t,j+i,b\psi^0(t,j+i))b\psi^0(t,j+i)-b\psi^0_t(t,j+i)\ge0\quad\forall\, j\in\Z,0<b\ll1. $$
Hence $u(t,j;i):=b\psi^0(t,j+i)$ is a sub-solution of \eqref{space-shifted-eqn} for $0<b\ll1$.

(3)
Let $\tilde{w}(t,j;i)=e^{Ct}(u(t,j;\underline{u}(0,\cdot;i,d,d_1,b),i)-\underline{v}(t,j;i,d,d_1))$, where $C$ is some positive constant to be determined later. Recall that $u(t,j;\underline{u}(0,\cdot;i,d,d_1,b),i)$ is the solution of \eqref{space-shifted-eqn} with $u(0,j;\underline{u}(0,\cdot;i,d,d_1,b),i)=\underline{u}(0,j;i,d,d_1,b)$. Then
$$\tilde{w}_t(t,j;i)\ge  H(i)\tilde{w}(t,j;i)+(C+\tilde{a}(t,j;i))\tilde{w}(t,j;i),$$
where
\begin{align*}\tilde{a}(t,j;i)=&f(t,j+i,u(t,j;\underline{u}(0,\cdot;i,d,d_1,b),i))\\+&\underline{v}(t,j;i,d,d_1)\int_0^1
f_u(t,j+i,\tau(u(t,j;\underline{u}(0,\cdot;i,d,d_1,b),i)-\underline{v}(t,j;i,d,d_1)))d\tau.
\end{align*}
Hence
\begin{equation}\label{difference-ineqn1}
\tilde{w}(t,j;i)\ge \tilde{w}(0,j;i)+\int_0^t [ H(i)\tilde{w}(s,j;i)+(C+\tilde{a}(s,j;i))\tilde{w}(s,j;i)]ds
\end{equation} for all $j\in\Z$. Similarly, let $\bar{w}(t,j;i)=e^{Ct}(u(t,j;\underline{u}(0,\cdot;i,d,d_1,b),i)-b\psi^0(t,j+i))$. Then
\begin{equation}\label{difference-ineqn2}
\bar{w}(t,j;i)\ge \bar{w}(0,j;i)+\int_0^t [ H(i)\bar{w}(s,j;i)+(C+\bar{a}(s,j;i))\bar{w}(s,j;i)]ds
\end{equation} for $j\in\Z$, where
\begin{align*}\bar{a}(t,j;i)=&f(t,j+i,u(t,j;\underline{u}(0,\cdot;i,d,d_1,b),i))\\+&b\psi^0(t,j+i)\int_0^1
f_u(t,j+i,\tau(u(t,j;\underline{u}(0,\cdot;i,d,d_1,b),i)-b\psi^0(t,j+i)))d\tau.
\end{align*}
Let  $w(t,j;i)=e^{Ct}(u(t,j;\underline{u}(0,\cdot;i,d,d_1,b),i)-\underline{u}(t,j;i,d,d_1,b))$. Choose $C>0$ such that
$C+\tilde{a}(s,j;i)>0$ and $C+\bar{a}(s,j;i)>0$. Note that
\begin{equation*}
w(t,j;i)=\begin{cases}\min\{\tilde{w}(t,j;i),\bar{w}(t,j;i)\},\quad j\le M+ct\cr
\tilde{w}(t,j;i),\quad j\ge M+ct.
\end{cases}
\end{equation*}
By \eqref{sub-solution}, \eqref{difference-ineqn1} and \eqref{difference-ineqn2},
\begin{equation*}
\tilde{w}(t,j;i)\ge w(0,j;i)+\int_0^t [ H(i)w(s,j;i)+(C+\tilde{a}(s,j;i))w(s,j;i)]ds
\end{equation*} for $j\in\Z$, and
\begin{equation*}
\bar{w}(t,j;i)\ge w(0,j;i)+\int_0^t [ H(i)w(s,j;i)+(C+\bar{a}(s,j;i))w(s,j;i)]ds
\end{equation*} for $j\le M+ct$. It then follows that
\begin{equation*}
w(t,j;i)\ge w(0,j;i)+\int_0^t [ H(i)w(s,j;i)+(C+\tilde{a}(s,j;i))w(s,j;i)]ds\quad\mbox{for }j\in\Z.
\end{equation*} By the arguments in Lemma \ref{comparison-lm}, we have $w(t,j;i)\ge0$ for $t\ge0$, $j,\,i\in\Z$. Then
$$u(t,j;\underline{u}(0,\cdot;i,d,d_1,b),i)\ge\underline{u}(t,j;i,d,d_1,b)\quad\mbox{for }t\ge0\mbox{ and }j,\,i\in\Z.$$
\end{proof}

Let  $$ \bar{v}(t,j;i,d,d_1)=de^{-\mu(j-ct)}\psi^\mu(t,j+i)+d_1 e^{-\mu^{'}(j-ct)}\psi^{\mu^{'}}(t,j+i)$$ and
\begin{equation}
\label{super-solution}
\bar{u} (t,j;i,d,d_1)=\min\{\bar{v}(t,j;i,d,d_1), u^+_{j+i}(t)\}.
\end{equation}

\begin{proposition}
\label{sup-solution-prop}
 \begin{itemize}
\item[(1)] For any $i\in\Z$,  $d>0$, and $d_1\ge 0$,  $\bar{v}(t,j;i,d,d_1)$ is a super-solution of \eqref{space-shifted-eqn}.
\item[(2)]  $u(t,j;\bar{u}(0,\cdot;i,d,d_1),i)\le\bar{u}(t,j;i,d,d_1)$ for $t\ge0$.
\end{itemize}
\end{proposition}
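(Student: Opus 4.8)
The plan is to follow the structure of Proposition~\ref{sub-solution-prop}, noting that for the super-solution $\bar v = \varphi + \varphi_1$, with $\varphi = de^{-\mu(j-ct)}\psi^\mu(t,j+i)$ and $\varphi_1 = d_1 e^{-\mu'(j-ct)}\psi^{\mu'}(t,j+i)$, every remainder term carries a favourable sign; this is what makes the super-solution case strictly easier than the sub-solution one and is the reason no balance condition between $d$ and $d_1$ is needed.

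For part (1), I would first record the identity coming from the defining property of $\psi^\mu$. Applying \eqref{periodic-linear-eq1} at the index $j+i$ and computing $H(i)\varphi$ directly, one gets
$$\varphi_t - H(i)\varphi - f(t,j+i,0)\varphi = (\mu c - \lambda(\mu))\varphi, \qquad (\varphi_1)_t - H(i)\varphi_1 - f(t,j+i,0)\varphi_1 = (\mu' c - \lambda(\mu'))\varphi_1.$$
Since $c = \lambda(\mu)/\mu$ the first coefficient vanishes, while $c > \lambda(\mu')/\mu'$ gives $\mu' c - \lambda(\mu') > 0$. Hence
$$\bar v_t - H(i)\bar v - \bar v f(t,j+i,\bar v) = (\mu' c - \lambda(\mu'))\varphi_1 + \bar v\big(f(t,j+i,0) - f(t,j+i,\bar v)\big).$$
As $d > 0$, $d_1 \ge 0$ give $\varphi_1 \ge 0$, $\bar v > 0$, and (H0) gives $f_u(t,j+i,\cdot) < 0$ on $[0,\infty)$, we have $f(t,j+i,0) - f(t,j+i,\bar v) = -\int_0^{\bar v} f_u(t,j+i,s)\,ds \ge 0$; both summands are nonnegative, proving the super-solution inequality. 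No quadratic remainder must be dominated here, which is exactly why any $d > 0$ and $d_1 \ge 0$ are admissible.

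For part (2), I would first observe that $w_j(t) := u^+_{j+i}(t)$ is an exact solution of \eqref{space-shifted-eqn}: evaluating the equation satisfied by the entire solution $u^+$ of \eqref{main-eqn} (Proposition~\ref{positive-entire-solution-prop}) at the index $j+i$ turns $d(t,\cdot)$ into the shifted coefficients defining $H(i)$. Thus $\bar u = \min\{\bar v, u^+_{j+i}\}$ from \eqref{super-solution} is the pointwise minimum of two super-solutions of \eqref{space-shifted-eqn} and, for each fixed $j$, is absolutely continuous in $t$. After verifying that this minimum is again a super-solution, the conclusion follows from the comparison principle, Lemma~\ref{comparison-lm}: since $u(0,\cdot;\bar u(0,\cdot;i,d,d_1),i) = \bar u(0,\cdot;i,d,d_1)$, it yields $u(t,\cdot;\bar u(0,\cdot;i,d,d_1),i) \le \bar u(t,\cdot;i,d,d_1)$ for all $t \ge 0$.

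The main obstacle is this last verification, since the nonlocal operator $H(i)$ couples neighbouring sites, so the crossing sites where the two candidates switch must be handled. The key is the monotonicity of $H(i)$ in its off-diagonal entries: at a site $(t,j)$ where, say, $\bar u(t,j) = \bar v(t,j) \le u^+_{j+i}(t)$, the bounds $\bar u(t,j\pm 1) \le \bar v(t,j\pm 1)$ together with $d(t,j+i\pm 1) > 0$ give $H(i)\bar v(t,j) \ge H(i)\bar u(t,j)$, and combining with the super-solution inequality for $\bar v$ shows $\partial_t \bar u(t,j) \ge H(i)\bar u(t,j) + \bar u(t,j)f(t,j+i,\bar u(t,j))$; the same holds on the set where $u^+_{j+i}$ is active. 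Alternatively, to bypass the crossing set I would reproduce the $e^{Ct}$-weighted integral comparison of Proposition~\ref{sub-solution-prop}(3): set $w(t,j;i) = e^{Ct}\big(\bar u(t,j;i,d,d_1) - u(t,j;\bar u(0,\cdot;i,d,d_1),i)\big)$, derive $w_t \ge H(i)w + (C + a(t,j))w$ with $C$ large enough that $C + a > 0$, and conclude $w \ge 0$ exactly as in that proof.
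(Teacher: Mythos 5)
Your proof is correct, and part (1) is essentially the paper's argument: the same eigenfunction identity, the vanishing of $(\mu c-\lambda(\mu))\varphi$, the positivity of $(\mu'c-\lambda(\mu'))\varphi_1$, and the sign of $\bar v\,(f(t,j+i,0)-f(t,j+i,\bar v))$ coming from $f_u<0$; the paper just writes it as an inequality ($f(t,j+i,\bar v)\le f(t,j+i,0)$ first) rather than isolating the remainder. Part (2) is where you genuinely diverge. The paper never verifies that $\bar u=\min\{\bar v,u^+_{\cdot+i}\}$ is a super-solution; it simply applies the comparison principle \emph{twice}, once against the super-solution $\bar v$ (using $\bar u(0,\cdot)\le\bar v(0,\cdot)$) and once against the exact shifted solution $u^+_{\cdot+i}$ (using $\bar u(0,\cdot)\le u^+_{\cdot+i}(0)$), and then observes that the solution lies below the minimum of the two resulting upper bounds, which is exactly $\bar u(t,\cdot)$. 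This two-line trick completely sidesteps the issues your route must confront: the a.e.\ time-differentiability of the minimum at crossing instants (where you implicitly invoke the standard fact that derivatives of absolutely continuous functions agree a.e.\ on their agreement set --- a point you gloss) and the monotonicity of the nonlocal operator $H(i)$ at switching sites. Your verification, and your fallback $e^{Ct}$-weighted integral argument modeled on Proposition \ref{sub-solution-prop}(3), can both be made rigorous, so nothing is wrong; but note that the min-of-super-solutions machinery is only genuinely needed on the sub-solution side (where $\max$ of sub-solutions is compared from below and no analogue of the double-comparison trick is available), which is precisely why the paper reserves the heavy argument for Proposition \ref{sub-solution-prop}(3) and keeps Proposition \ref{sup-solution-prop}(2) trivial.
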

\begin{proof}
(1)  Let  $\varphi=de^{-\mu(j-ct)}\psi^\mu(t,j+i)$ and $ \varphi_1= d_1e^{-\mu^{'}(j-ct)}\psi^{\mu^{'}}(t,j+i)$.
By direct calculation, we have
\begin{align*}
&\bar{v}_t-[ H(i)\bar{v}(t,j;i,d,d_1)+ \bar{v}(t,j;i,d,d_1)f(t,j+i, \bar{v}(t,j;i,d,d_1))]\\\ge& \bar{v}_t-[H(i)\bar{v}(t,j;i,d,d_1)+\bar{v}(t,j;i,d,d_1)f(t,j+i,0)]\\=& (\mu c-\lambda(\mu))\varphi+(\mu^{'} c-\lambda(\mu^{'}))\varphi_1\\\ge&0.
\end{align*}
(2) By comparison principle,
$$u(t,j;\bar{u}(0,\cdot;i,d,d_1),i)\le\bar{v}(t,j;i,d,d_1)$$
and $$u(t,j;\bar{u}(0,\cdot;i,d,d_1),i)\le u^{+}(t)$$ for $t\ge0$. (2) then follows.
\end{proof}

Let
$$
\phi(t,j)=e^{-\mu(j-ct)}\psi^\mu(t,j),\quad \phi_1(t,j)=e^{-\mu^{'}(j-ct)}\psi^{\mu^{'}}(t,j).
$$

\begin{proposition}
\label{sub-super-solution-prop} Let $0<d\le 2$.
For any $u^0\in l^{\infty,+}$, if
$$
u^0\le {d} \phi(t_0,\cdot)+d_1 \phi_1(t_0,\cdot)\quad ({\rm resp.,} \,\, u^0\ge {d}\phi(t_0,\cdot)-d_1\phi_1(t_0,\cdot)),
$$
then for $\frac{d_1}{d}\gg 1$,
$$
u(t;t_0,u^0)\le {d} \phi(t,\cdot)+d_1 \phi_1(t,\cdot)\quad ({\rm resp.,} \,\, u(t;t_0,u^0)\ge {d} \phi(t,\cdot)-d_1 \phi_1(t,\cdot))
$$
for $t\ge t_0$.
\end{proposition}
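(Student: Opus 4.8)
The plan is to read off both inequalities from the comparison principle (Lemma \ref{comparison-lm}(1)), using the explicit super- and sub-solutions already built in Propositions \ref{sup-solution-prop} and \ref{sub-solution-prop} at the spatial shift $i=0$. Indeed, \eqref{space-shifted-eqn} with $i=0$ is exactly \eqref{main-eqn}, and for that choice $\bar v(t,\cdot;0,d,d_1)=d\phi(t,\cdot)+d_1\phi_1(t,\cdot)$ while $\underline v(t,\cdot;0,d,d_1)=d\phi(t,\cdot)-d_1\phi_1(t,\cdot)$. The one genuine obstacle is that neither comparison function is bounded: $\bar v$ blows up like $e^{-\mu(j-ct)}$ as $j\to-\infty$, and $\underline v$ tends to $-\infty$ there because $\mu<\mu'$ makes the $\phi_1$ term dominate. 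Hence Lemma \ref{comparison-lm}(1), which requires bounded sub/super-solutions, cannot be invoked on them directly, and the remedy I would use is to truncate each against a second, bounded super- (resp.\ sub-) solution.

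For the super-solution inequality I would set $M=\max\{M_0,\|u^0\|\}$ and consider the bounded function $w(t,\cdot)=\min\{d\phi(t,\cdot)+d_1\phi_1(t,\cdot),\,M\}$. The constant state $M\ge M_0$ is a super-solution of \eqref{main-eqn}, since the discrete diffusion vanishes on constants and $Mf(t,j,M)<0$ by (H0) (this is essentially \eqref{positive-solu-eq2}), and $d\phi+d_1\phi_1$ is a super-solution by Proposition \ref{sup-solution-prop}(1) with $i=0$. I would then verify that the pointwise minimum of two super-solutions is a super-solution: at an index $j$ and a.e.\ time where the first function is the smaller, absolute continuity gives $\partial_t w(t,j)=\partial_t(d\phi+d_1\phi_1)(t,j)$, while the positivity of the coefficients $d(t,j\pm1)$ and the relations $w(t,j\pm1)\le d\phi(t,j\pm1)+d_1\phi_1(t,j\pm1)$ with equality at $j$ make the discrete diffusion evaluated on $w$ no larger than that evaluated on the active function; the reaction term is unchanged since $w(t,j)$ equals the active value. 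Thus $w$ is a bounded super-solution with $w(t_0,\cdot)\ge u^0$ (using $u^0\le d\phi(t_0,\cdot)+d_1\phi_1(t_0,\cdot)$ together with $u^0\le M$), and Lemma \ref{comparison-lm}(1) gives $u(t;t_0,u^0)\le w(t,\cdot)\le d\phi(t,\cdot)+d_1\phi_1(t,\cdot)$ for all $t\ge t_0$.

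The sub-solution inequality is symmetric. For $\tfrac{d_1}{d}\gg1$, Proposition \ref{sub-solution-prop}(1) with $i=0$ gives that $d\phi-d_1\phi_1$ is a sub-solution, and the constant $0$ is a solution, hence a sub-solution. I would form $w(t,\cdot)=\max\{d\phi(t,\cdot)-d_1\phi_1(t,\cdot),\,0\}$, which is bounded: the proof of Proposition \ref{sub-solution-prop}(1) shows $d\phi-d_1\phi_1\le\bar M$ wherever it is positive, and the truncation bounds it below by $0$. The dual of the computation above shows that the pointwise maximum of two sub-solutions is a sub-solution. Since $u^0\in l^{\infty,+}$ satisfies both $u^0\ge0$ and $u^0\ge d\phi(t_0,\cdot)-d_1\phi_1(t_0,\cdot)$, we have $u^0\ge w(t_0,\cdot)$, and Lemma \ref{comparison-lm}(1) yields $u(t;t_0,u^0)\ge w(t,\cdot)\ge d\phi(t,\cdot)-d_1\phi_1(t,\cdot)$ for $t\ge t_0$.

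I expect the only step needing genuine care to be the verification that the $\min$/$\max$ of two super-/sub-solutions is again a super-/sub-solution in the a.e.\ sense of the definition, since at the crossover indices one must handle the non-smoothness of the truncation; everything else is a direct appeal to the constructions in Propositions \ref{sub-solution-prop}--\ref{sup-solution-prop} and to the comparison principle. An alternative that sidesteps this min/max verification is to repeat, almost verbatim, the integral-inequality bootstrap from the proofs of Propositions \ref{sub-solution-prop}(3) and \ref{sup-solution-prop}(2), replacing their special initial data by the general $u^0$ and the cap $u^+$ by the constant $M$; this is slightly longer but invokes only Lemma \ref{comparison-lm}.
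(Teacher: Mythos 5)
Your proposal is correct, and its core is the same route the paper intends --- the paper's entire proof is the citation ``It follows from Proposition \ref{sub-solution-prop}, Proposition \ref{sup-solution-prop} and $f_u(t,j,u)<0$'': take $i=0$ in \eqref{space-shifted-eqn}, so that $\bar v(t,\cdot;0,d,d_1)=d\phi+d_1\phi_1$ and $\underline v(t,\cdot;0,d,d_1)=d\phi-d_1\phi_1$ are the super-/sub-solutions of \eqref{main-eqn} furnished by Propositions \ref{sup-solution-prop}(1) and \ref{sub-solution-prop}(1), and then compare. What you add --- a genuine improvement in rigor rather than a different method --- is the treatment of unboundedness: Lemma \ref{comparison-lm}(1) is stated only for \emph{bounded} sub-/super-solutions, whereas $\bar v\to+\infty$ and $\underline v\to-\infty$ as $j\to-\infty$, a point the paper glosses over (its own proof of Proposition \ref{sup-solution-prop}(2) applies ``comparison principle'' directly to the unbounded $\bar v$). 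Your fix --- truncate above by the constant super-solution $M=\max\{M_0,\|u^0\|\}$ and below by the solution $0$, then use that a pointwise $\min$ (resp.\ $\max$) of two super- (resp.\ sub-)solutions is again one --- is sound, and your sketch of that verification is the right one: the nonlocal terms have the correct sign because $w(t,j\pm1)\le u_i(t,j\pm1)$ with equality at the active index $j$, and for a.e.\ $t$ the derivative of the truncated function coincides with that of an active function (including a.e.\ on the coincidence set, by the standard fact for absolutely continuous functions). Note also that your constant truncations are better adapted to general $u^0$ than the paper's own truncations by $u^+$ and $b\psi^0$ in \eqref{super-solution} and \eqref{sub-solution}, since the hypotheses on $u^0$ give neither $u^0\le u^+(t_0)$ nor $u^0\ge b\psi^0(t_0,\cdot)$. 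The alternative you mention (re-running the integral-inequality bootstrap of Propositions \ref{sub-solution-prop}(3) and \ref{sup-solution-prop}(2) with data $u^0$) is the other plausible reading of the paper's one-liner; either implementation proves the proposition.
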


\begin{proof}
It follows from Proposition \ref{sub-solution-prop}, Proposition \ref{sup-solution-prop} and
$f_u(t,j,u)<0$ for $u\ge0$.
\end{proof}

We now state the main results of this section.

\begin{theorem}
\label{main-periodic-thm}
Consider \eqref{main-eqn} and assume that $d(t+T,j)=d(t,j+J)=d(t,j)$ and
$f(t+T,j,u)=f(t,j+J,u)=f(t,j,u)$,
\begin{itemize}
\item[(1)] (Existence) For any $c>c^*$, there is a periodic traveling wave solution $u_j(t)=U(t,j)$ with speed $c$ satisfying that
\begin{equation}
\label{periodic-wave-eq1}
\phi(t,j)-d_1^* \phi_1(t,j)\le U(t,j)\le \phi(t,j)+d^*_1 \phi_1(t,j)
\end{equation}
for some $d_1^*>0$.

\item[(2)] (Stability) For any $c>c^*$, $t_0\in\RR$, and $u^0\in l^{\infty,+}$ satisfying
$$
\inf_{j\le j_0} u^0_j>0,\quad \lim_{j\to\infty}\frac{u^0_j}{U(t_0,j)}=1,
$$
there holds
$$
\lim_{t\to\infty} \frac{u_j(t+t_0;t_0,u^0)}{U(t+t_0,j)}=1\quad \text{uniformly in}\quad j\in\Z.
$$

\item[(3)] (Uniqueness) If $u_j(t)=V(j,t)$ is also a periodic traveling wave solution of \eqref{main-eqn} with speed $c$ and satisfying
that
$$
\lim_{j\to\infty} \frac{V(t,j+[ct])}{U(t,j+[ct])}=1\quad \text{uniformly in}\,\, t\in\RR,
$$
then
$$
V(t,j)\equiv U(t,j).
$$
\end{itemize}
\end{theorem}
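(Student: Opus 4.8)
The plan is to dispatch parts (2) and (3) as direct corollaries of the general Theorem \ref{main-general-thm}, so the real work is the construction in part (1). For existence I fix $c>c^*$, the exponents $0<\mu<\mu'<\min\{2\mu,\mu^*\}$ and the barriers $\phi,\phi_1$ as above, and take $d=1$, $d_1=d_1^*\gg1$, $0<b\ll1$. By Proposition \ref{sup-solution-prop} the bounded function $\bar u=\min\{\bar v,u^+\}$, with $\bar v=\phi+d_1^*\phi_1$, is a super-solution of \eqref{main-eqn}. I let $u_n$ be the solution of \eqref{main-eqn} on $[-nT,\infty)$ with $u_n(-nT,\cdot)=\bar u(-nT,\cdot)$. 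The super-solution property together with Lemma \ref{comparison-lm} gives $u_n(t,\cdot)\le\bar u(t,\cdot)\le u^+(t,\cdot)$ and, comparing the data of $u_{n+1}$ and $u_n$ at $t=-nT$, $u_{n+1}(t,\cdot)\le u_n(t,\cdot)$; hence $u_n$ decreases to an entire solution $U\le u^+$ of \eqref{main-eqn} (using Lemma \ref{convergence-lm} and continuous dependence to pass to the limit on all of $\R$), which satisfies $U\le\phi+d_1^*\phi_1$. The sub-solutions $\underline v$ and $b\psi^0$ of Proposition \ref{sub-solution-prop}, which near the leading edge remain below $u^+$, keep $U$ above $\phi-d_1^*\phi_1$ there; since $\mu'>\mu$ forces $\phi-d_1^*\phi_1<0$ behind the front, this yields \eqref{periodic-wave-eq1} for all $j$.

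Next I verify that $U$ is a periodic traveling wave with front location $X(t)=[ct]$. The leading-edge limit $U(t,j)\to0$ as $j\to\infty$ is immediate from $U\le\phi+d_1^*\phi_1$ and $\phi,\phi_1\to0$, while the trailing-edge limit $U(t,j+X(t))\to u^+_{j+X(t)}(t)$ (uniformly in $t$) as $j\to-\infty$ follows, using $U\ge b\psi^0>0$ behind the edge and the attraction to $u^+$, exactly as in the arguments underlying Proposition \ref{positive-entire-solution-prop} and Theorem \ref{main-general-thm}. Writing $U(t,j)=\Phi(j-ct,t,j)$, the required periodicities $\Phi(\cdot,\cdot+T,\cdot)=\Phi(\cdot,\cdot,\cdot+J)=\Phi$ are inherited from the periodic structure of the ingredients: $u^+$ is itself $(T,J)$-periodic, since both $u^+(\cdot+T,\cdot)$ and the spatial $J$-shift $\{u^+_{j+J}(t)\}_j$ are again strictly positive entire solutions of \eqref{main-eqn} and hence coincide with $u^+$ by the uniqueness in Proposition \ref{positive-entire-solution-prop}, while $\phi=e^{-\mu(j-ct)}\psi^\mu$ and $\phi_1=e^{-\mu'(j-ct)}\psi^{\mu'}$ are already of pulsating form because $\psi^\mu,\psi^{\mu'}$ are $T$-periodic in $t$ and $J$-periodic in $j$.

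Parts (2) and (3) then follow by checking the hypotheses of Theorem \ref{main-general-thm} for this $U$ and $X(t)=[ct]$ and quoting it. Condition \eqref{cond-eq1} holds because $|[ct]-[cs]|\le c|t-s|+1$; conditions \eqref{cond-eq2}--\eqref{cond-eq3} are read off from $\phi=e^{-\mu(j-ct)}\psi^\mu$, $\phi_1=e^{-\mu'(j-ct)}\psi^{\mu'}$ with $0<\mu<\mu'$, using the positivity and boundedness of $\psi^\mu,\psi^{\mu'}$ and the boundedness of $X(t)-ct$ (the latter also giving the required uniformity in $t$ of the second limit); condition \eqref{cond-eq4} is \eqref{periodic-wave-eq1} with $d^*=1$; and condition \eqref{cond-eq5} is precisely Proposition \ref{sub-super-solution-prop} (note $0<d\le 2=2d^*$). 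Theorem \ref{main-general-thm}(1) then yields the stability assertion (2), and Theorem \ref{main-general-thm}(2), whose hypothesis $\lim_{j\to\infty}V(t,j+[ct])/U(t,j+[ct])=1$ is exactly the normalization assumed in (3), yields the uniqueness assertion.

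The reduction of (2) and (3) to Theorem \ref{main-general-thm} is routine; the substantive work is the existence in part (1). I expect the main obstacle to be confirming that the monotone limit $U$ is a genuine, nontrivial transition front rather than a degenerate limit. Concretely, two points need care: matching the decreasing super-solution bound with a lower bound of the form $\phi-d_1^*\phi_1$ near the leading edge (so that the sequence does not collapse to $0$), which requires handling the sub-solutions of Proposition \ref{sub-solution-prop} on the moving region where they remain below $u^+$; and establishing the trailing-edge convergence $U\to u^+$ uniformly in $t$. This last front-limit estimate is the delicate step, and it is the point at which I would lean on the arguments of \cite{CaSh} and \cite{She17}.
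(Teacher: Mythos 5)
Your reduction of parts (2) and (3) to Theorem \ref{main-general-thm} is exactly the paper's: \eqref{cond-eq1}--\eqref{cond-eq4} are read off from \eqref{periodic-wave-eq1} and the explicit $\phi,\phi_1$, and \eqref{cond-eq5} is Proposition \ref{sub-super-solution-prop}. The gap is in part (1). Your monotone iteration on the lattice (solve \eqref{main-eqn} from the data $\bar u(-nT,\cdot)$ at time $-nT$, let $n\to\infty$) does produce an entire solution $U(t,j)$ squeezed between $\phi\pm d_1^*\phi_1$, and in fact it coincides with the wave the paper constructs. But your argument does not show that $U$ is a \emph{periodic traveling wave} in the sense required by the theorem: by the definition in Section 1 one must exhibit a profile $\Phi(x,t,j)$ defined for \emph{all real} $x$, satisfying $\Phi(\cdot,\cdot+T,\cdot)=\Phi(\cdot,\cdot,\cdot+J)=\Phi(\cdot,\cdot,\cdot)$ together with $\lim_{x\to-\infty}\Phi(x,t,j)=u^+_j(t)$ and $\lim_{x\to\infty}\Phi(x,t,j)=0$. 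A single lattice entire solution determines $\Phi$ only on the lower-dimensional set $\{(j-ct,t,j):t\in\R,\ j\in\Z\}$, so ``writing $U(t,j)=\Phi(j-ct,t,j)$'' defines nothing, and the periodicity cannot be ``inherited from the ingredients'': the recursion one would need, roughly $U(t+T,j)=U(t,j-cT)$ combined with $J$-periodicity in $j$, is meaningless on the lattice because $cT$ is in general not an integer. This non-commensurability of the wave displacement with the lattice is precisely the central difficulty of part (1), and your proposal has no device to handle it.

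The paper resolves it by passing to the space-continuous equation \eqref{space-continuous-eqn} with a continuum of spatial shifts $z\in\R$ (coefficients extended as step functions), and running the monotone scheme $u^n(t,x,z)=u(t+nT,x+cnT;\bar u(0,\cdot;z-cnT),z-cnT)$, resp.\ $u_n$ from $\underline u$; note the iteration shifts space by the non-integer amount $cnT$ and compensates through the shift parameter $z$. The limits $u^{\pm}(t,x,z)$ are entire solutions, and $\Psi^{\pm}(x,t,z)=u^{\pm}(t,x+ct,z-ct)$ is then shown to satisfy $\Psi^{\pm}(x,T,z)=\Psi^{\pm}(x,0,z)$, $\Psi^{\pm}(x,t,z+J)=\Psi^{\pm}(x,t,z)$, the exponential leading-edge asymptotics \eqref{decay}, and the uniform trailing-edge limit \eqref{uniform-convergence-proper}; the profile is finally $\Phi^{\pm}(x,t,z)=\Psi^{\pm}(x,t,z-x)$. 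Two further points are worth noting. First, the uniformity in $t$ (and $z$) of the trailing-edge limit is obtained in the paper from Proposition \ref{sub-sup-bound-prop}, Proposition \ref{positive-entire-solution-prop} and Lemma \ref{convergence-lm} \emph{together with} the just-established $T$-periodicity of $\Psi^{\pm}$, which reduces uniformity over $t\in\R$ to compactness of $[0,T]$; you flag this step as delicate and propose to borrow it from \cite{CaSh}, \cite{She17}, but as the paper runs it, the step is unavailable without the periodicity you have not proved. Second, even your claim that \eqref{periodic-wave-eq1} pins down the behavior ``uniformly in $t$'' in condition \eqref{cond-eq3} is fine, but the analogous uniformity for the $u^+$-limit behind the front again needs the periodic structure. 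So the substantive missing idea is the paper's continuum-of-shifts construction; without it, part (1) — and hence the hypotheses you feed into Theorem \ref{main-general-thm} for (2) and (3) — is not established.
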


In order to prove the existence of the periodic traveling wave solution, we consider the following space continuous version of \eqref{space-shifted-eqn},
\begin{equation}\label{space-continuous-eqn}
\partial_tu(t,x)= H(z)u(t,x)+u(t,x)f(t,x+z,u(t,x))\quad\quad x\in\R,z\in\R,
\end{equation}
where
$$ H(z)u(t,x)=d(t,x+z+1)\big(u(t,x+1)-u(t,x)\big)+d(t,x+z-1)\big(u(t,x-1)-u(t,x)\big),
 $$
 and $d(t,x)=d(t,j)$, $f(t,x,u)=f(t,j,u)$ for $j\le x<j+1$. Let $u(t,x;u_0,z)$ be the solution of \eqref{space-continuous-eqn} with $u(0,x;u_0,z)=u_0(x)$ for $u_0\in l^\infty(\R)=\{u=\{u(x)\}_{x\in \R}:\sup \limits_{x \in\R}|u(x)|<\infty\}$.

  Let $u^+_{x}(t)=u^+_j(t)$,  $\psi^{\mu}(t,x)=\psi^{\mu}(t,j)$, and $\psi^{\mu^{'}}(t,x)=\psi^{\mu^{'}}(t,j)$ for $t\in\R$ and $x\in\R$ with $j\le x<j+1$, $j\in\Z$. Let
 $$
 \underbar v(t,x;z,d,d_1)=de^{-\mu (x-ct)}\psi^\mu(t,x+z)-d_1e^{-\mu^{'}(x-ct)}\psi^{\mu^{'}}(t,x+z)\quad {\rm for}\,\, t,x,z\in\R,
 $$
\begin{equation}\label{sub-solution-new}
\underline{u} (t,x;z,d,d_1,b)=\begin{cases}\max\{{ b\psi^0(t,x+z)},\underline{v} (t,x;z,d,d_1)\},\quad x\le M+ct\cr
\underline{v} (t,x;z,d,d_1),\quad x\ge M+ct,
\end{cases}
\end{equation}
 and
   $$ \bar{v}(t,x;z,d,d_1)=de^{-\mu(x-ct)}\psi^\mu(t,x+z)+d_1 e^{-\mu^{'}(x-ct)}\psi^{\mu^{'}}(t,x+z),$$
\begin{equation}
\label{super-solution-new}
\bar{u} (t,x;z,d,d_1)=\min\{\bar{v}(t,x;z,d,d_1), u^+_{x+z}(t)\}.
\end{equation}
 By the similar arguments as those in Propositions \ref{sub-solution-prop} and \ref{sup-solution-prop},
we can also get that,
 for  $0<d\le 2$, $\frac{d_1}{d}\gg1$, and $0<b\ll1$, $u(t,x;\underline{u} (0,\cdot;z,d,d_1,b),z)\ge\underline{u} (t,x;z,d,d_1,b)$
 and  $u(t,x;\bar{u}(0,\cdot;z,d,d_1),z)\le\bar{u}(t,x;z,d,d_1)$ for $t\ge0$.
 For fixed $0<d\le 2$, $d_1>0$ with $\frac{d_1}{d}\gg1$, and $0<b\ll1$, put
  $$\underline{u}(t,x;z)=\underline{u}(t,x;z,d,d_1,b),\quad \bar u(t,x;z)=\bar u(t,x;z,d,d_1).
  $$

\begin{proposition}\label{sub-sup-bound-prop} There is a constant $C$  such that for any $0<b\ll1$ and $\frac{d_1}{d}\gg1$,
\begin{align}\label{sub-sup-bound}
\inf\limits_{x\le C,t\ge0,z\in\R} u(t,x+ct;\bar{u}(0,\cdot;z),z)\ge \inf\limits_{x\le C,t\ge0,z\in\R} u(t,x+ct;\underline{u}(0,\cdot;z),z)>0.
\end{align}
\end{proposition}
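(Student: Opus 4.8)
The plan is to prove the two inequalities in \eqref{sub-sup-bound} separately: the right-hand inequality (strict positivity of the infimum of the sub-solution-initiated orbit) carries the real content, while the left-hand inequality is a soft consequence of the comparison principle.

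First I would dispose of the left inequality. By the definitions \eqref{sub-solution-new} and \eqref{super-solution-new} together with $\underline v\le\bar v$, the strict positivity $\inf_{j\in\Z,t\in\R}u^+_j(t)>0$, and the uniform bounds $0<\inf\psi^0\le\sup\psi^0<\infty$ (recall that $\psi^0$ is continuous and $(T,J)$-periodic in $(t,j)$), one checks that for $0<b\ll 1$ and $\frac{d_1}{d}\gg 1$ there holds $\underline u(0,\cdot;z)\le\bar u(0,\cdot;z)$ for every $z\in\R$. Lemma \ref{comparison-lm} then yields $u(t,x;\underline u(0,\cdot;z),z)\le u(t,x;\bar u(0,\cdot;z),z)$ for all $t\ge0$, $x\in\R$; replacing $x$ by $x+ct$ and taking the infimum over $\{x\le C,\ t\ge0,\ z\in\R\}$ gives the first inequality, for whatever value of $C$ is eventually chosen.

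For the right inequality I would pass to the moving frame. By the continuous analogue of Proposition \ref{sub-solution-prop}(3) recorded just before the statement, $u(t,x;\underline u(0,\cdot;z),z)\ge\underline u(t,x;z)$ for $t\ge0$, so it suffices to bound $\underline u(t,x+ct;z)$ from below. Evaluating \eqref{sub-solution-new} at $x+ct$ turns the weight $e^{-\mu(x-ct)}$ into $e^{-\mu x}$ and turns the region $x\le M+ct$ into $x\le M$, which is independent of both $t$ and $z$. Since $M>N>0$ in \eqref{sub-inequ} for every admissible triple $(b,d,d_1)$, I would take $C=0$ (any fixed $C\le0$ works): then $x\le C$ forces $x\le M$, so we land in the branch where $\underline u(t,x+ct;z)=\max\{b\psi^0(t,x+ct+z),\underline v(t,x+ct;z)\}\ge b\psi^0(t,x+ct+z)$. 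Because $\psi^0_{\min}:=\inf_{t\in\R,j\in\Z}\psi^0(t,j)>0$, this gives $\underline u(t,x+ct;z)\ge b\psi^0_{\min}>0$ uniformly in $x\le C$, $t\ge0$ and $z\in\R$, which is precisely the right inequality.

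The point where I would be most careful is uniformity, and three things must hold simultaneously for all admissible parameters. First, the moving-frame floor region $\{x\le C\}$ must sit inside the $\max$ branch of $\underline u$ uniformly in $t,z$; this is secured by the $t,z$-independence of the reduced condition $x\le M$ together with $M>N>0$. Second, the lower bound must be genuinely positive, which comes from $\psi^0_{\min}>0$ rather than from $\underline v$: indeed $\underline v(t,x+ct;z)>0$ only for $x>\tfrac{1}{\mu'-\mu}\ln\!\big((d_1/d)\,\psi^{\mu'}/\psi^\mu\big)$, whose positivity region drifts to $+\infty$ as $d_1/d\to\infty$ and is therefore useless on $\{x\le C\}$. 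Third, the inequality $\underline u(0,\cdot;z)\le\bar u(0,\cdot;z)$ used for the left inequality must survive the crossover between the $b\psi^0$ and $\underline v$ branches, which is the only genuinely fiddly verification and is handled by taking $b$ small relative to $de^{-\mu M}\psi^\mu_{\min}$ and to $\inf u^+$. None of these steps invokes the finer wave-construction estimates; the whole statement is a direct consequence of the sub- and super-solution bounds already in place.
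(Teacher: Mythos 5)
Your proposal is correct and follows essentially the same route as the paper: the paper establishes the chain $\underline{u}(t,x;z)\le u(t,x;\underline{u}(0,\cdot;z),z)\le u(t,x;\bar{u}(0,\cdot;z),z)\le\bar{u}(t,x;z)$ from Propositions \ref{sub-solution-prop} and \ref{sup-solution-prop} together with comparison, and then, exactly as you do, notes that in the moving frame the region $x\le M$ lies in the $\max$ branch of \eqref{sub-solution-new}, so $\underline{u}(t,x+ct;z)\ge b\inf_{t,x}\psi^0(t,x)>0$. Your only deviations are cosmetic (taking $C=0$ rather than $C=M$, and spelling out the ordering $\underline u(0,\cdot;z)\le\bar u(0,\cdot;z)$ that the paper leaves implicit).
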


\begin{proof}
First, by \eqref{sub-solution}, Propositions \ref{sub-solution-prop} and \ref{sup-solution-prop}, for any $t\ge0$,
\begin{equation}\label{inequality-sup-sub}
\underline{u}(t,x;z)\le u(t,x;\underline{u}(0,\cdot;z),z)\le u(t,x;\bar{u}(0,\cdot;z),z)\le\bar{u}(t,x;z). \end{equation}
Observe that
\begin{align*}
\underline{u}(t,x+ct;z)&=\max\{b\psi^0(t,x+ct+z),\underline{v}(t,x+ct;z,d,d_1)\}\quad\mbox{for }x\le M\\&\ge
b\psi^0(t,x+ct+z)\quad\mbox{for } x\le M\\ &\ge \inf\limits_{t\in\R,x\in\R} b\psi^0(t,x)\\&>0.
\end{align*}
This together with \eqref{inequality-sup-sub} implies \eqref{sub-sup-bound}.
\end{proof}

\begin{lemma}
Let
$$u^n(t,x,z)=u(t+nT,x+cnT;\bar{u}(0,\cdot;z-cnT),z-cnT)$$
and
$$u_n(t,x,z)=u(t+nT,x+cnT;\underline{u}(0,\cdot;z-cnT),z-cnT).$$
Then for any given bounded interval $I\subset\R$, there is $N_0\in\mathbb{N}$ such that $u^n(t,x,z)$ is non-increasing in $n$ and $u_n(t,x,z)$ is non-dereasing in $n$ for $n\ge N_0$, $t\in I$, $x$, $z\in\R$.
\end{lemma}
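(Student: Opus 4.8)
The plan is to reduce the monotonicity in $n$ to a single application of the comparison principle, by exploiting a space--time shift covariance of the solution operator of \eqref{space-continuous-eqn} together with the super-/sub-solution estimates recorded just before the lemma. The key observation is that, writing $z''=z-cnT$, the two consecutive iterates $u^n(t,x,z)$ and $u^{n+1}(t,x,z)$ are both values of solutions of \eqref{space-continuous-eqn} carrying the \emph{same} parameter $z''$ and started at time $0$; hence it suffices to order their initial data at $t=0$.

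First I would establish the shift covariance: if $V(s,y)=u(s,y;u_0,z')$ solves \eqref{space-continuous-eqn} with parameter $z'$, then $\hat V(s,y):=V(s+T,y+cT)$ solves \eqref{space-continuous-eqn} with parameter $z'+cT$. This is a direct computation using only the time periodicity $d(s+T,\cdot)=d(s,\cdot)$ and $f(s+T,\cdot,\cdot)=f(s,\cdot,\cdot)$: the spatial shift $cT$ is absorbed into the parameter, since $y+cT+z'\pm1=y+(z'+cT)\pm1$. Applying this with $z'=z-c(n+1)T$ (so that $z'+cT=z''$) and $V(s,y)=u(s,y;\bar u(0,\cdot;z'),z')$ gives
$$u^{n+1}(t,x,z)=\hat V(t+nT,x+cnT)=u\bigl(t+nT,x+cnT;\hat V(0,\cdot),z''\bigr),$$
whereas by definition $u^n(t,x,z)=u(t+nT,x+cnT;\bar u(0,\cdot;z''),z'')$. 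Since $\hat V(0,y)=u(T,y+cT;\bar u(0,\cdot;z'),z')$, the comparison principle for \eqref{space-continuous-eqn} reduces the claim $u^{n+1}\le u^n$ to the inequality $\hat V(0,\cdot)\le\bar u(0,\cdot;z'')$.

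That inequality follows from the super-solution estimate $u(t,x;\bar u(0,\cdot;z'),z')\le\bar u(t,x;z')$ (the continuous analogue of Proposition \ref{sup-solution-prop}(2)) evaluated at $t=T$, combined with the identity $\bar u(T,y+cT;z')=\bar u(0,y;z'')$. The identity is where the structure of $\bar u$ and $\bar v$ enters: in $\bar v(T,y+cT;z')$ the factor $e^{-\mu((y+cT)-cT)}=e^{-\mu y}$ strips off the $cT$, the $T$-periodicity $\psi^\mu(T,\cdot)=\psi^\mu(0,\cdot)$ turns the first slot from $T$ into $0$, and $y+cT+z'=y+z''$; the same holds for the $\mu'$ term and, using $u^+_j(T)=u^+_j(0)$, for the $u^+$ piece inside the minimum. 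The sub-solution sequence is handled identically: with $\hat W(s,y)=W(s+T,y+cT)$ and $W=u(\cdot,\cdot;\underline u(0,\cdot;z'),z')$, the estimate $u(t,x;\underline u(0,\cdot;z'),z')\ge\underline u(t,x;z')$ (continuous analogue of Proposition \ref{sub-solution-prop}(3)) at $t=T$ and the identity $\underline u(T,y+cT;z')=\underline u(0,y;z'')$ give $\hat W(0,\cdot)\ge\underline u(0,\cdot;z'')$, whence $u_{n+1}\ge u_n$ by comparison.

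I expect the main obstacle to be bookkeeping rather than analysis, and two points need care. First, in the sub-solution identity one must check that the case threshold transforms correctly: for $\underline u(T,y+cT;z')$ the condition $x\le M+ct$ reads $y+cT\le M+cT$, i.e. $y\le M$, which is precisely the threshold defining $\underline u(0,y;z'')$, so the two branches (the $\max$ with $b\psi^0$ and the bare $\underline v$) match up slot by slot. Second, both the comparison principle and the super-/sub-solution estimates require $t+nT\ge0$; this is exactly why the conclusion holds only for $n\ge N_0$, and why $I$ is assumed bounded---choosing $N_0$ with $N_0T\ge-\inf I$ guarantees $t+nT\ge0$ for all $t\in I$ and $n\ge N_0$.
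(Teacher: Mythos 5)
Your proof is correct and follows essentially the same route as the paper's: both arguments reduce the monotonicity in $n$ to the one-period identity $\bar{u}(T,\cdot+cT;z-c(n+1)T)=\bar{u}(0,\cdot;z-cnT)$ (and its sub-solution analogue), the super-/sub-solution estimates of Propositions \ref{sub-solution-prop} and \ref{sup-solution-prop}, and the comparison principle, with the time-periodicity and spatial-shift covariance of the solution operator handling the re-indexing. Your packaging of the periodicity and shift into a single covariance statement, and your explicit checks of the threshold $x\le M+ct$ and of the constraint $t+nT\ge 0$ (the origin of $N_0$), are simply more detailed renderings of steps the paper carries out inline.
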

\begin{proof}
First, observe that
$$\bar{u}(T,x+cT;z-cnT)=\bar{u}(0,x;z-c(n-1)T)\quad\forall\, n\ge0.$$
Hence for given $t\in\R$ and $n\in\mathbb{N}$ with $t+(n-1)T>0$,
\begin{align*}
&u^n(t,x,z)\\&=u(t+nT,x+cnT;\bar{u}(0,\cdot;z-cnT),z-cnT)\\&=u(t+(n-1)T,x+cnT;u(T,\cdot;\bar{u}(0,\cdot;z-cnT),z-cnT),z-cnT)\\&=u(t+(n-1)T,x+c(n-1)T;u(T,\cdot+cT;\bar{u}(0,\cdot;z-cnT),z-cnT),z-c(n-1)T)\\&\le u(t+(n-1)T,x+c(n-1)T;\bar{u}(T,\cdot+cT;z-cnT),z-c(n-1)T)\\&= u(t+(n-1)T,x+c(n-1)T;\bar{u}(0,\cdot;z-c(n-1)T),z-c(n-1)T)\\&=u^{n-1}(t,x,z).
\end{align*}

Similarly, we can prove that for given $t\in\R$ and $n\in\mathbb{N}$ with $t+(n-1)T>0$,
$$u_n(t,x,z)\ge u_{n-1}(t,x,z).$$
\end{proof}

Let $$u^+(t,x,z)=\lim\limits_{n\to\infty}u^n(t,x,z),$$
$$u^-(t,x,z)=\lim\limits_{n\to\infty}u_n(t,x,z),$$
and
$$\Psi^{\pm}_0(x,z)=u^{\pm}(0,x,z).$$

\begin{lemma}
For each $z\in\R$, $u^{\pm}(t,x,z)=u(t,x;\Psi^{\pm}_0(\cdot,z),z)$ for $t\in\R$ and $x\in\R$ and hence $u^{\pm}(t,x,z)$ are entire solutions of \eqref{space-continuous-eqn}.
\end{lemma}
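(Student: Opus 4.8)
The plan is to first check that each $u^n(\cdot,\cdot,z)$ and $u_n(\cdot,\cdot,z)$ is itself a solution of \eqref{space-continuous-eqn} with the \emph{same} parameter $z$ on the time interval $[-nT,\infty)$, then to identify the monotone limits $u^{\pm}$ with the forward solutions emanating from $\Psi^{\pm}_0$ via the continuous-dependence Lemma \ref{convergence-lm}, and finally to upgrade this to the entire-solution property by passing to the limit in the integrated form of the equation on compact time intervals.

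For the first step, write $u(\theta,\xi;u_0,z-cnT)$ for the solution of \eqref{space-continuous-eqn} with parameter $z-cnT$, so that $u^n(t,x,z)=u(t+nT,x+cnT;\bar u(0,\cdot;z-cnT),z-cnT)$. Differentiating in $t$ and substituting $\theta=t+nT$, $\xi=x+cnT$, the internal spatial argument becomes $\xi+(z-cnT)=x+z$, which is independent of $n$, so the spatial shift $+cnT$ is cancelled exactly by the parameter shift $-cnT$. The remaining time shift is absorbed by the $T$-periodicity of the coefficients, $d(t+nT,\cdot)=d(t,\cdot)$ and $f(t+nT,\cdot,\cdot)=f(t,\cdot,\cdot)$. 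Hence $\partial_t u^n=H(z)u^n+u^n f(t,x+z,u^n)$, i.e. $u^n(\cdot,\cdot,z)$ solves \eqref{space-continuous-eqn} with parameter $z$ on $[-nT,\infty)$; the identical computation applies to $u_n$.

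Next, since $u^n(\cdot,\cdot,z)$ is a solution on $[0,\infty)$ with datum $u^n(0,\cdot,z)$ at $t=0$, uniqueness gives $u^n(t,x,z)=u(t,x;u^n(0,\cdot,z),z)$ for $t\ge0$. By construction $u^n(0,\cdot,z)\to\Psi^{+}_0(\cdot,z)$ pointwise as $n\to\infty$, and the sequence is uniformly bounded because of the squeezing $\underline u(t,x;z)\le u(t,x;\underline u(0,\cdot;z),z)\le u(t,x;\bar u(0,\cdot;z),z)\le\bar u(t,x;z)$ recorded in \eqref{inequality-sup-sub} (coming from Propositions \ref{sub-solution-prop}, \ref{sup-solution-prop} and \ref{sub-sup-bound-prop}). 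The space-continuous analogue of Lemma \ref{convergence-lm} then yields, for each fixed $t\ge0$ and $x$, that $u(t,x;u^n(0,\cdot,z),z)\to u(t,x;\Psi^{+}_0(\cdot,z),z)$. Since the left-hand side also converges to $u^{+}(t,x,z)$, we obtain $u^{+}(t,x,z)=u(t,x;\Psi^{+}_0(\cdot,z),z)$ for $t\ge0$; the argument for $u^{-}$ is the same.

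Finally, to obtain the entire-solution property, fix $t_1<t_2$ and take $n$ so large that $-nT<t_1$. On $[t_1,t_2]$ each $u^n(\cdot,\cdot,z)$ satisfies, for every $x$, the integrated equation
$$
u^n(t,x,z)=u^n(t_1,x,z)+\int_{t_1}^t\big[H(z)u^n(s,x,z)+u^n(s,x,z)f(s,x+z,u^n(s,x,z))\big]\,ds.
$$
The integrand is bounded uniformly in $n$ (again by \eqref{inequality-sup-sub} and the boundedness of $d$ and $f$ on the relevant range), so dominated convergence lets us pass $n\to\infty$ and conclude that $u^{+}$ satisfies the same integrated equation on $[t_1,t_2]$; as $t_1,t_2$ are arbitrary, $u^{+}$ solves \eqref{space-continuous-eqn} on all of $\R$. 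Combined with $u^{+}(0,\cdot,z)=\Psi^{+}_0(\cdot,z)$ and uniqueness, this identifies $u^{+}$ (and likewise $u^{-}$) as the entire solution determined by the datum $\Psi^{\pm}_0(\cdot,z)$ at $t=0$, which is the assertion. The main obstacle is the bookkeeping in the first step: one must verify that the three simultaneous shifts --- time by $nT$, space by $cnT$, and parameter by $-cnT$ --- conspire, through the exact cancellation $\xi+(z-cnT)=x+z$ together with the $T$-periodicity in time, to return a solution of the \emph{unshifted} equation with parameter $z$; everything after that is a routine monotone-limit and dominated-convergence argument.
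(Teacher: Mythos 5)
Your proposal is correct and takes essentially the same approach as the paper: the key step in both is the exact cancellation of the time shift $nT$ (absorbed by $T$-periodicity of the coefficients) against the space shift $cnT$ and parameter shift $-cnT$, showing each $u^n(\cdot,\cdot,z)$ solves the equation with parameter $z$, followed by passing to the limit in the integrated equation via dominated convergence. Your intermediate appeal to the space-continuous analogue of Lemma \ref{convergence-lm} for $t\ge 0$ is a harmless extra step; the paper's single dominated-convergence argument already yields both the identification with $u(t,x;\Psi^{\pm}_0(\cdot,z),z)$ and the entire-solution property.
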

\begin{proof}
We prove the case that $u(t,x,z)=u^+(t,x,z)$. First, note that
\begin{align*}
u^n&(t,x,z)\\=&u(t,x+cnT;u(nT,\cdot;\bar{u}(0,\cdot;z-cnT),z-cnT),z-cnT)\\=&u(t,x;u(nT,\cdot+cnT;\bar{u}(0,\cdot;z-cnT),z-cnT),z)\\
=&u^n(0,x,z)\\&+\int_0^t\big[ H(z)u^n(\tau,x,z)+u^n(\tau,x,z)f(\tau,x+z,u^n(\tau,x,z)) \big]d\tau.
\end{align*}
where $$ H(z)u^n(t,x,z)=d(t,x+z+1)\big(u^n(t,x+1,z)-u^n(t,x,z)\big)+ d(t,x+z-1)\big(u^n(t,x-1,z)-u^n(t,x,z)\big).$$
Then by Lebesgue dominated convergence theorem,
\begin{align*}
u(t,x,z)=&\Psi^+_0(x,z) \\&+\int_0^t\big[ H(z)u(\tau,x,z)+u(\tau,x,z)f(\tau,x+z,u(\tau,x,z)) \big]d\tau.
\end{align*}
This implies that $u(t,x,z)=u(t,x;\Psi^{+}_0(\cdot,z),z)$ for $t\in\R$ and $x\in\R$ and $u(t,x,z)$ is an entire solution of \eqref{space-continuous-eqn}.
\end{proof}

\begin{proof} [Proof of Theorem \ref{main-periodic-thm}]
(1) Note that, following from \cite{LiZh2} and \cite{Wei02}, for any $c\ge c^*$, \eqref{main-eqn} has a periodic traveling wave solution with speed
$c$. But the property \eqref{periodic-wave-eq1} is not established. In the following, we provide a proof of the existence of periodic traveling wave
solutions of \eqref{main-eqn} with speeds $c>c^*$ satisfying the property \eqref{periodic-wave-eq1}, which enables us to use Theorem \ref{main-general-thm} to prove (2) and (3).

Let
$$\Psi^{\pm}(x,t,z)=u^{\pm}(t,x+ct,z-ct)(=u(t,x+ct;\Psi^{\pm}_0(\cdot,z-ct),z-ct)).$$

First of all, $u(t,x;\Psi^{\pm}(\cdot,0,z),z)=\Psi^{\pm}(x-ct,t,z+ct)$ follows directly from the definition of $\Psi^{\pm}(x,t,z)$.

Secondly, we prove that
$$\lim\limits_{x-ct\to\infty}\frac{\Psi^{\pm}(x-ct,t,z+ct)}{ de^{-\mu(x-ct)}\psi^{\mu}(t,x+z)}=1$$ uniformly in $t\in\R$ and $z\in\R$, which is equivalent to
\begin{equation}\label{decay}
\lim\limits_{x\to\infty}\frac{\Psi^{\pm}(x,t,z)}{de^{-\mu x}\psi^{\mu}(t,x+z)}=1\end{equation}
uniformly in $t\in\R$ and $z\in\R$. Note that
\begin{align}\label{inequality-Psi}
\underline{v} (t,x;z,d,d_1)&=de^{-\mu(x-ct)}\psi^\mu(t,x+z)-d_1e^{-\mu^{'}(x-ct)}\psi^{\mu^{'}}(t,x+z)\nonumber\\&\le u(t,x;\Psi^{\pm}(\cdot,0,z),z)\nonumber\\&=\Psi^{\pm}(x-ct,t,z+ct)\nonumber\\&\le \bar{v} (t,x;z,d,d_1)\nonumber\\&=de^{-\mu(x-ct)}\psi^\mu(t,x+z) +d_1e^{-\mu^{'}(x-ct)}\psi^{\mu^{'}}(t,x+z)
\end{align} for $t\in\R$ and $x,z\in\R$. \eqref{decay} then follows from \eqref{inequality-Psi}.

Thirdly, we prove the periodicity of $\Psi^{\pm}(x,t,z)$ in $t$ and $z$. Note that
\begin{equation*}
\Psi^+(x,t,z)=\lim_{n\to\infty}u\big(t+nT,x+cnT+ct;\bar{u}(0,\cdot;z-cnT-ct),z-cnT-ct\big).
\end{equation*}
Then we have
\begin{align}\label{periodicity-time}
\Psi^+(x,T,z)&=\lim_{n\to\infty}u\big((n+1)T,x+c(n+1)T;\bar{u}(0,\cdot;z-c(n+1)T),z-c(n+1)T\big)\nonumber\\&=\lim_{n\to\infty}u\big(nT,x+cnT;\bar{u}(0,\cdot;z-cnT),z-cnT\big)\nonumber\\&=\Psi^+(x,0,z)
\end{align}
and
\begin{align}\label{periodicity-space}
\Psi^+(x,t,z+J)&=\lim_{n\to\infty}u\big(t+nT,x+cnT+ct;\bar{u}(0,\cdot;z+J-cnT-ct),z+J-cnT-ct\big)\nonumber\\&=
\lim_{n\to\infty}u\big(t+nT,x+cnT+ct;\bar{u}(0,\cdot;z-cnT-ct),z-cnT-ct\big)\nonumber\\&=\Psi^+(x,t,z).
\end{align}
Similarly, we have
\begin{equation}\label{periodicity-time2}
\Psi^-(x,T,z)=\Psi^-(x,0,z),
\end{equation}
\begin{equation}\label{periodicity-space2}
\Psi^-(x,t,z+J)=\Psi^-(x,t,z).
\end{equation}

By Proposition \ref{sub-sup-bound-prop},
$$\inf_{x\le C,t\ge0,z\in\R}\Psi^{\pm}(x,t,z)>0.$$
Then by Proposition \ref{positive-entire-solution-prop}, Lemma \ref{convergence-lm} and the periodicity of $\Psi^{\pm}(x,t,z)$ in $t$, we have
\begin{equation}\label{uniform-convergence-proper}
\lim_{x\to -\infty}\big(\Psi^{\pm}(x,t,z)- u^+_{x+z}(t)\big)=0
\end{equation} uniformly in $t\in\R$ and $z\in\R$.

Let $$\Phi^{\pm}(x,t,z)=\Psi^{\pm}(x,t,z-x)\quad\quad\mbox{for }x,\,z\in\R.$$
By \eqref{decay}, \eqref{periodicity-time}-\eqref{uniform-convergence-proper}, $\Phi^{\pm}(x,t,z)$ generate traveling wave solutions with speed $c$ satisfying \eqref{periodic-wave-eq1}.

(2)  It follows from Proposition \ref{sub-super-solution-prop}, \eqref{periodic-wave-eq1} and Theorem \ref{main-general-thm}(1).

(3) It follows from  \eqref{periodic-wave-eq1} and Theorem \ref{main-general-thm}(2).
\end{proof}

\section{Existence, stability and uniqueness of transition waves in time heterogeneous media}

In this section, we assume that $d(t,j)\equiv d(t)$ and $f(t,j,u)\equiv f(t,u)$, and study the existence, uniqueness, and stability of
transition waves of \eqref{main-eqn}.

We first recall some results on  transition waves established in the recent paper \cite{CaSh}.
Define
$$
\bar f_{\inf}=\liminf_{t\ge s, t-s\to\infty}\frac{1}{t-s}\int_s^t f(\tau,0)d\tau,
$$
$$
\bar f_{\sup}=\limsup_{t\ge s, t-s\to\infty}\frac{1}{t-s}\int_s^t f(\tau,0)d\tau,
$$
$$
\bar f_{\inf}^+=\liminf_{t\ge s\ge 0, t-s\to\infty}\frac{1}{t-s}\int_s^t f(\tau,0)d\tau,
$$
and
$$
\bar f_{\sup}^+=\limsup_{t\ge s\ge 0, t-s\to\infty}\frac{1}{t-s}\int_s^t f(\tau,0)d\tau.
$$

For given $\mu>0$, let
$$c(t;\mu)=\frac{e^{-\mu}+e^{\mu}-2+f(t,0)}{\mu}.
$$
Let
$$
 \tilde c_0^-:=\inf \limits_{\mu>0}\frac{e^{-\mu}+e^\mu-2+\bar f_{\inf}}{\mu}.
 $$
 By \cite[Lemma 5.1]{CaSh},
 There is a unique $\mu^*>0$ such that
$$
\tilde c_0^-=\frac{e^{-\mu^*}+e^{\mu^*}-2+\bar f_{\inf}}{\mu^*}
$$
and
for any $\gamma>\tilde c_0^-$,
 the equation $\gamma=\frac{e^{-\mu}+e^{\mu}-2+\bar f_{\inf}}{\mu}$ has exactly two positive solutions for $\mu$.

For any $\gamma>\tilde c_0^-$, let $0<\mu<\mu^*$ be such that $\frac{e^{-\mu}+e^{\mu}-2+\bar f_{\inf}}{\mu}=\gamma$ and $c(t)=c(t;\mu)$.  Let
$$
\phi(t,j)=e^{-\mu(j-\int_0^ t c(\tau)d\tau)}.
$$
Let  $B(t)=-(e^{-\tilde{\mu}}+e^{\tilde{\mu}}-2)+c(t)\tilde{\mu}-f(t,0)$.
Note that
\begin{eqnarray*}
\bar B_{\inf}&=&-(e^{-\tilde{\mu}}+e^{\tilde{\mu}}-2)+\gamma \tilde{\mu}-\bar f_{\inf}\\&=&\tilde{\mu}(\gamma-\frac{e^{-\tilde{\mu}}+e^{\tilde{\mu}}-2+\bar f_{\inf}}{\tilde{\mu}}),
\end{eqnarray*}
thus we can choose $\tilde{\mu}\in(\mu,2\mu)$ such that $\bar B_{\inf}>0$.
By \cite[Lemma 3.2]{NaRo12}, there is $A\in W^{1,\infty}(\RR)$ such that ${\rm essinf}_{t\in\RR}(A^{\prime}+B)>0$.
Let
$$
\phi_1(t,j)=e^{A(t)-\tilde\mu (j-\int_0^t c(\tau)d\tau)}.
$$

\begin{proposition}
\label{sub-super-time-dependent-prop}
For given $u^0\in l^{\infty,+}$ and $t_0\in\RR$, if
$$
u^0\le d\phi(t_0,\cdot)+d_1 \phi_1(t_0,\cdot)\quad ({\rm resp.}, \,\, u^0\ge d\phi(t_0,\cdot)-d_1\phi_1(t_0,\cdot))
$$ for $0<d\le 2$ and $d_1\gg 1$, then
$$
(t;t_0,u^0)\le d\phi(t,\cdot)+d_1 \phi_1(t,\cdot)\quad ({\rm resp.}, \,\, u(t;t_0,u^0)\ge d\phi(t,\cdot)-d_1\phi_1(t,\cdot))
$$
for all $t\ge t_0$.
\end{proposition}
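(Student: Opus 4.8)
The plan is to mirror the proof of Proposition \ref{sub-super-solution-prop} from the periodic case: I will build an explicit one-sided barrier out of $\phi$ and $\phi_1$ and then invoke the comparison principle (Lemma \ref{comparison-lm}). For the upper estimate I set $\bar v(t,j)=d\phi(t,j)+d_1\phi_1(t,j)$, and for the lower estimate $\underline v(t,j)=d\phi(t,j)-d_1\phi_1(t,j)$. Since the hypothesis gives $u^0\le\bar v(t_0,\cdot)$ (resp. $u^0\ge\underline v(t_0,\cdot)$), once I know that $\bar v$ is a super-solution (resp. $\underline v$ is a sub-solution) of \eqref{main-eqn} in the present time-heterogeneous setting, Lemma \ref{comparison-lm} yields $u(t;t_0,u^0)\le\bar v(t,\cdot)$ (resp. $u(t;t_0,u^0)\ge\underline v(t,\cdot)$) for all $t\ge t_0$, which is exactly the assertion.

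The computational core is to verify the two differential inequalities, and the exponential ansatz makes this explicit. Using $\phi(t,j\pm1)=e^{\pm\mu}\phi(t,j)$, $\phi_1(t,j\pm1)=e^{\pm\tilde\mu}\phi_1(t,j)$, $\partial_t\phi=\mu c(t)\phi$ and $\partial_t\phi_1=(A'(t)+\tilde\mu c(t))\phi_1$, I apply to $d\phi\pm d_1\phi_1$ the linear part of \eqref{main-eqn} together with the zero-order term $f(t,0)$. The coefficient multiplying $\phi$ collapses to zero by the very definition of $c(t)=c(t;\mu)$, and the coefficient multiplying $\phi_1$ reduces to $\pm d_1(A'(t)+B(t))$ by the definition of $B(t)$; by the choice of $A$ (so that $\mathrm{essinf}_t(A'+B)>0$) this carries the sign $\pm$. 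Thus for the super-solution the defect equals $d_1(A'+B)\phi_1\ge 0$, and since $f_u<0$ forces $f(t,\bar v)\le f(t,0)$ for $\bar v\ge0$, $\bar v$ is a genuine super-solution.

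The sub-solution estimate is the step I expect to be the real obstacle, since $\underline v$ changes sign and the nonlinearity must be absorbed where $\underline v>0$. Where $\underline v\le0$, assumption (H0) gives $f(t,\underline v)=f(t,0)$, and the same computation returns a defect $-d_1(A'+B)\phi_1\le0$, so $\underline v$ is a sub-solution there. Where $\underline v>0$ I write $f(t,\underline v)=f(t,0)+\underline v\int_0^1 f_u(t,r\underline v)\,dr$, and it then suffices to dominate the leftover quadratic term, i.e. to show $d_1(A'+B)\phi_1\ge L\,\underline v^2$, with $L$ a bound on $-f_u$. Here I use that $\underline v>0$ forces $d\phi>d_1\phi_1$, so $j-\int_0^t c$ is bounded below (hence $\phi\le1$ and $\underline v\le d\le2$, legitimizing a fixed $L$), together with $\underline v^2\le d^2\phi^2$ and the key spectral choice $\tilde\mu\in(\mu,2\mu)$, which makes $\phi^2/\phi_1=e^{-A(t)}\,e^{-(2\mu-\tilde\mu)(j-\int_0^t c)}$ stay bounded on the region $\underline v>0$. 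Since $\mathrm{essinf}_t(A'+B)>0$, the inequality $d_1(A'+B)\phi_1\ge L\,d^2\phi^2$ holds once $d_1/d$ is large, i.e. for $d_1\gg1$, completing the verification. With both barriers established, a single application of Lemma \ref{comparison-lm} finishes the proof.
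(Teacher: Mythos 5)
Your proposal is correct and is essentially the paper's own argument: the paper's one-line proof defers to the arguments of \cite[Lemma 5.2]{CaSh} together with $f_u(t,u)<0$, and those arguments are exactly the barrier construction and defect computation you spell out (the time-heterogeneous analogue of Propositions \ref{sub-solution-prop}--\ref{sub-super-solution-prop}), with the quadratic term absorbed via $\tilde\mu\in(\mu,2\mu)$ and ${\rm essinf}_t(A'+B)>0$ just as you describe. The only technical caveat is that Lemma \ref{comparison-lm}(1) is stated for \emph{bounded} sub-/super-solutions while $d\phi\pm d_1\phi_1$ blow up as $j\to-\infty$, so strictly speaking you should truncate before citing it (e.g. compare $u$ with $\max\{d\phi-d_1\phi_1,0\}$ and with $\min\{d\phi+d_1\phi_1,K\}$ for $K\ge\max\{M_0,\|u^0\|_{l^\infty}\}$, which are again sub-/super-solutions) --- a technicality the paper itself glosses over in the same way.
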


\begin{proof}
It follows from the arguments of \cite[Lemma 5.2]{CaSh} and $f_u(t,u)<0$ for $u\ge0$.
\end{proof}

 \begin{proposition}
 \label{existence-time-dependent-prop}
  For any $\gamma>\tilde c_0^-$,  let $0<\mu<\mu^*$ and  $c(t)=\frac{e^{-\mu}+e^{\mu}-2+f(t,0)}{\mu}$ be such that $\bar c_{\inf}=\gamma$. Then there exists a transition wave solution $u_j(t)=U(t,j)$ satisfying that
  $$
  \phi(t,j)-d_1^* \phi_1(t,j)\le U(t,j)\le \phi(t,j)+d_1^* \phi_1(t,j)
  $$
  for some $d_1^*>0$.
 \end{proposition}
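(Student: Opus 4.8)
The plan is to mirror the limiting construction carried out in Section 3 for the periodic case, taking advantage of the simplification that, since $d(t,j)\equiv d(t)$ and $f(t,j,u)\equiv f(t,u)$, equation \eqref{main-eqn} is spatially homogeneous and the positive entire solution $u^+(t)$ of Proposition \ref{positive-entire-solution-prop} is independent of $j$ (indeed $u^+$ solves the ODE $\dot u^+=u^+f(t,u^+)$, the discrete Laplacian of a constant vanishing). Set $d=1$, write $X(t)=\lfloor\int_0^t c(\tau)\,d\tau\rfloor\in\Z$, and using the $\phi,\phi_1$ already fixed before the statement, define
\[
\bar u(t,j)=\min\{\phi(t,j)+d_1\phi_1(t,j),\,u^+(t)\},\qquad
\underline u(t,j)=\max\{\underline w(t),\,\phi(t,j)-d_1\phi_1(t,j)\},
\]
where $d_1\gg1$ is chosen as in Proposition \ref{sub-super-time-dependent-prop} and $\underline w(t)$ is a small positive spatially constant sub-solution providing a floor to the far left (built as in the periodic case from the constant-in-$j$ sub-solutions of the linearization at $0$, which grow because $\bar f_{\inf}>0$ by \eqref{assumption-eq}). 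Proposition \ref{sub-super-time-dependent-prop} together with $f_u<0$ shows $\bar u$ and $\underline u$ are genuine super- and sub-solutions with $\underline u\le\bar u$.

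Next I would run the approximation by entire solutions. For each $n$ put $u^n(t)=u(t;-n,\bar u(-n,\cdot))$ and $u_n(t)=u(t;-n,\underline u(-n,\cdot))$. The super/sub-solution property gives $u(-n;-(n+1),\bar u(-(n+1),\cdot))\le\bar u(-n,\cdot)$ and the reverse inequality for $\underline u$, so Lemma \ref{comparison-lm} forces $\{u^n\}$ to be non-increasing and $\{u_n\}$ non-decreasing in $n$, with $\underline u(t,\cdot)\le u_n(t,\cdot)\le u^n(t,\cdot)\le\bar u(t,\cdot)$ for all $t\ge -n$. Passing to the monotone, uniformly bounded limits $U^{\pm}(t)=\lim_{n}u^{(n)}(t)$ and using Lemma \ref{convergence-lm} together with Lebesgue dominated convergence in the integral formulation—exactly as in the two lemmas preceding the proof of Theorem \ref{main-periodic-thm}—shows that $U^{\pm}$ are entire solutions of \eqref{main-eqn} with $\underline u\le U^{\pm}\le\bar u$. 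In particular $\phi(t,j)-d_1\phi_1(t,j)\le U^{\pm}(t,j)\le\phi(t,j)+d_1\phi_1(t,j)$, which is the asserted two-sided bound with $d_1^*=d_1$.

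It then remains to verify that $U:=U^+$ is a transition wave with front $X(t)$. Condition \eqref{cond-eq1} is immediate: $c(t)$ is bounded, so $|X(t)-X(s)|\le|t-s|\sup_t|c(t)|+1$. The right decay $\lim_{j\to\infty}U(t,j+X(t))=0$, uniformly in $t$, follows from $U\le\phi+d_1\phi_1$ together with $\tilde\mu>\mu$ and the boundedness of $A(t)$. The delicate point is the left limit $\lim_{j\to-\infty}|U(t,j+X(t))-u^+(t)|=0$ uniformly in $t$: the cap $U\le u^+$ gives one inequality, and the matching lower bound must be extracted from the floor. My plan is to first establish a uniform positive lower bound $\inf_{t\in\R,\,j\le X(t)+C}U(t,j)>0$ (the analog of the estimate in Proposition \ref{sub-sup-bound-prop}), and then invoke the attractivity in Proposition \ref{positive-entire-solution-prop}: for any $\epsilon>0$ there is $T_0$, uniform in the initial time, after which any solution whose datum is bounded below by this constant lies within $\epsilon$ of $u^+$. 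Comparing $U$ from below, on a window to the left of the front, with such a uniformly-positive-data solution (using spatial homogeneity to translate the window) forces $U(t,j+X(t))$ into an $\epsilon$-neighborhood of $u^+(t)$ once $j$ is sufficiently negative, uniformly in $t$.

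I expect this last step—the uniform-in-$t$ convergence to $u^+$ on the left—to be the main obstacle, precisely because the periodicity in $t$ that made the corresponding argument \eqref{uniform-convergence-proper} routine in Section 3 is unavailable here. Its substitute is the uniform-in-$t_0$ attractivity built into Proposition \ref{positive-entire-solution-prop}, which must be combined carefully with the far-left floor; the technical work concentrates on making the comparison window interact correctly with the moving front $X(t)$ while keeping every estimate uniform in $t$. Once this is in place, $U$ is a transition wave obeying the stated bounds, which proves the proposition.
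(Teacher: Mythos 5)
Your overall strategy is, in substance, the same as the paper's: the paper proves Proposition \ref{existence-time-dependent-prop} by a one-line citation to the arguments of \cite[Theorem 1.3]{CaSh}, and those arguments are exactly the monotone-limit construction you describe (solutions started at times $-n$ from barrier data built out of $\phi$, $\phi_1$ and a left floor, mirroring Section 3). Your observation that $u^+(t)$ is independent of $j$ here (uniqueness in Proposition \ref{positive-entire-solution-prop} plus spatial homogeneity) is correct and a genuine simplification. However, your displayed sub-solution barrier contains a concrete error: you set $\underline u(t,j)=\max\{\underline w(t),\,\phi(t,j)-d_1\phi_1(t,j)\}$ for \emph{all} $j$. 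Since $\inf_{t}\underline w(t)>0$, this gives $\underline u(t,j)\ge \underline w(t)>0$ as $j\to+\infty$, whereas $\bar u(t,j)\le \phi(t,j)+d_1\phi_1(t,j)\to 0$ as $j\to+\infty$. So $\underline u\le \bar u$ fails on the right, the squeeze $u(t;-n,\underline u(-n,\cdot))\le u(t;-n,\bar u(-n,\cdot))$ that your monotone limits rest on breaks down, and no limit $U$ could satisfy both $U\ge\underline u$ and the right-tail decay required of a transition wave. The floor must be cut off at a moving position, exactly as in \eqref{sub-solution}: take the maximum with $\underline w(t)$ only for $j\le M+\int_0^t c(\tau)\,d\tau$ and use $\phi-d_1\phi_1$ alone to the right of that position, with $M$ large, the floor small, and $d_1$ large so that the two pieces match as in \eqref{sub-inequ} and the cutoff barrier remains invariant under the flow (the analogue of Proposition \ref{sub-solution-prop}(3)). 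Your prose ("a floor to the far left, as in the periodic case") suggests you intend this, but the formula you would actually compute with is the wrong one.

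The second gap is the one you flag yourself: the uniform-in-$t$ left limit. A uniform positive floor $\inf\{U(t,j):\ j\le X(t)+C,\ t\in\RR\}>0$ plus the uniform attractivity in Proposition \ref{positive-entire-solution-prop} does not by itself yield $\lim_{j\to-\infty}|U(t,j+X(t))-u^+(t)|=0$ uniformly in $t$, because at time $t-T_0$ the datum of $U$ is only bounded below by the floor on a half-line, not on all of $\Z$. You still need a truncation comparison: bound $U(t,\cdot)$ from below by the solution started at $t-T_0$ from the datum equal to the floor constant for $j\le X(t-T_0)+C$ and $0$ elsewhere, and estimate its discrepancy from the spatially constant solution (an ODE solution, by homogeneity) via a linear equation with bounded coefficients, whose influence from the truncated right half-line decays in the distance to the cutoff. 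Only after this tail estimate, combined with $U\le u^+$ and $\sup_t|X(t)-X(t-T_0)|<\infty$, does the argument close. This is precisely the technical content of the cited arguments in \cite{CaSh}; as written, your proposal identifies the right plan but does not yet carry it out.
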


\begin{proof}
It follows from the arguments of \cite[Theorem 1.3]{CaSh}.
\end{proof}

\begin{theorem}
\label{main-time-dependent-thm}  For any $\gamma>\tilde c_0^-$,  let $0<\mu<\mu^*$ and  $c(t)=\frac{e^{-\mu}+e^{\mu}-2+f(t,0)}{\mu}$ be such that $\bar c_{\inf}=\gamma$. Let $u_j(t)=U(t,j)$ be the transition wave solution in Proposition \ref{existence-time-dependent-prop}.

\begin{itemize}
\item[(1)] (Stability)  For any $u^0\in l^{\infty,+}$ and $t_0\in\RR$ satisfying  that
$$
\inf_{j\le j_0} u^0_j>0\quad \forall\,\, j_0\in\Z,\quad \lim_{j\to\infty} \frac{u^0_j}{U(t_0,j)}=1,
$$
there holds
$$
\lim_{t\to\infty} \frac{u_j(t+t_0;t_0,u^0)}{U(t+t_0,j)}=1
$$
uniformly in $j\in\Z$.

\item[(2)] (Uniqueness) If $u_j(t)=V(t,j)$ is  a transition wave solution of \eqref{main-eqn} satisfying that
$$
\lim_{j\to\infty} \frac{V(t,j+[\int_0^ t c(\tau)d\tau])}{U(t,j+[\int_0^t c(\tau)d\tau])}=1
$$
uniformly in $t\in\RR$, then
$$
V(t,j)\equiv U(t,j).
$$
\end{itemize}
\end{theorem}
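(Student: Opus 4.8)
The plan is to deduce both assertions directly from the general stability and uniqueness result, Theorem \ref{main-general-thm}, applied to the transition wave $U(t,j)$ furnished by Proposition \ref{existence-time-dependent-prop}, with the explicit choices
$$
\phi(t,j)=e^{-\mu(j-\int_0^t c(\tau)\,d\tau)},\quad \phi_1(t,j)=e^{A(t)-\tilde\mu(j-\int_0^t c(\tau)\,d\tau)},\quad X(t)=\Big[\int_0^t c(\tau)\,d\tau\Big],
$$
and $d^*=1$. Two of the four structural hypotheses of Theorem \ref{main-general-thm} are already in hand: the bound \eqref{cond-eq4} is exactly the two-sided estimate in Proposition \ref{existence-time-dependent-prop}, and the sub-/super-solution invariance \eqref{cond-eq5} (for $0<d<2d^*=2$, $d_1\gg1$) is exactly Proposition \ref{sub-super-time-dependent-prop}. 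So the task reduces to checking the purely quantitative conditions \eqref{cond-eq1}, \eqref{cond-eq2}, \eqref{cond-eq3} for these explicit functions, after which parts (1) and (2) follow immediately from Theorem \ref{main-general-thm}(1) and (2), the matching normalization in \eqref{cond-eq6} and in the uniqueness hypothesis being precisely the limits assumed in the present statement.

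The verification is then a routine computation with the exponential profiles. First I would note that $c(t)$ is bounded, so $X(t)$ has increments controlled by $\int_s^t|c(\tau)|\,d\tau+1$ over any interval $|t-s|\le\tau$, which gives \eqref{cond-eq1}. Since $\mu,\tilde\mu>0$, the monotonicity of the exponentials in $j$ yields the blow-up as $j\to-\infty$ and the decay as $j\to\infty$ in \eqref{cond-eq2} at once. For \eqref{cond-eq3} I would use the explicit ratios
$$
\frac{\phi(t,j+X(t))}{\phi_1(t,j+X(t))}=e^{-A(t)}\,e^{(\tilde\mu-\mu)(j+X(t)-\int_0^t c(\tau)\,d\tau)},\quad
\frac{\phi_1(t,j+X(t))}{\phi(t,j+X(t))}=e^{A(t)}\,e^{-(\tilde\mu-\mu)(j+X(t)-\int_0^t c(\tau)\,d\tau)}.
$$
Because $\tilde\mu-\mu>0$, the first ratio tends to $0$ exponentially as $j\to-\infty$ and the second tends to $0$ exponentially as $j\to\infty$; the uniformity in $t$ of the second limit is secured by $A\in W^{1,\infty}(\RR)$ (so $A(t)$ is bounded) together with $0\le\int_0^t c(\tau)\,d\tau-X(t)<1$, which keeps the $t$-dependent shift inside a bounded window.

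With all four hypotheses verified, Theorem \ref{main-general-thm}(1) yields the asymptotic stability \eqref{cond-eq7}, i.e. part (1), and Theorem \ref{main-general-thm}(2) yields $V\equiv U$ under the stated uniform-ratio condition at $+\infty$, i.e. part (2). The only point requiring genuine care, and hence the main obstacle, is the uniformity-in-$t$ clause of \eqref{cond-eq3}: one must confirm that $A(t)$ and the gap $\int_0^t c(\tau)\,d\tau-X(t)$ remain bounded uniformly in $t\in\RR$, which is exactly what the construction of $A$ (from \cite[Lemma 3.2]{NaRo12}) and the definition of the integer part $X(t)$ provide. Everything else is a transcription of the two auxiliary propositions into the hypotheses of the general theorem.
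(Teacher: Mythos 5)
Your proposal is correct and takes essentially the same route as the paper: the paper's own proof consists precisely of invoking Proposition \ref{sub-super-time-dependent-prop}, Proposition \ref{existence-time-dependent-prop}, and Theorem \ref{main-general-thm}(1),(2). Your explicit verification of \eqref{cond-eq1}--\eqref{cond-eq3} for the exponential profiles $\phi$, $\phi_1$ with $X(t)=\big[\int_0^t c(\tau)\,d\tau\big]$ (including the uniformity in $t$ via the boundedness of $A$ and of the fractional gap) is exactly the routine checking that the paper leaves implicit.
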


\begin{proof}
(1) It follows from Propositions \ref{sub-super-time-dependent-prop} and \ref{existence-time-dependent-prop}, and Theorem \ref{main-general-thm}(1).

(2) It follows from Propositions \ref{sub-super-time-dependent-prop} and \ref{existence-time-dependent-prop}, and Theorem \ref{main-general-thm}(2).
\end{proof}

\end{document}